\newtheorem{theorem}{Theorem}
\newtheorem{proposition}{Proposition}
\newtheorem{corollary}{Corollary}
\newtheorem{lemma}{Lemma}
\newtheorem{Def}{Definition}
\newcommand{\Cov}{\operatorname{Cov}}
\newcommand{\Var}{\operatorname{Var}}
\newcommand{\E}{\operatorname{E}}
\newcommand{\Tr}{\operatorname{Tr}}
\newcommand{\Xbar}[1]{\overline{{#1}}}
\newcommand{\myspace}{\hspace{4pt}}
\begin{document}

\title[On confidence intervals for precision matrices]{On confidence intervals for precision matrices and the eigendecomposition of covariance matrices}


\author*[1]{\fnm{Teodora} \sur{Popordanoska}}\email{teodora.popordanoska@kuleuven.be}

\author[2]{\fnm{Aleksei} \sur{Tiulpin}}

\author[3]{\fnm{Wacha} \sur{Bounliphone}} 

\author[1]{\fnm{Matthew} \sur{B. Blaschko}}

\affil[1]{ESAT-PSI, KU Leuven, Belgium}
\affil[2]{Research Unit of Medical Imaging, Physics and Technology, University of Oulu, Finland}
\affil[3]{IPSOS, Social Intelligence Analytics, France}

\abstract{The eigendecomposition of a matrix is the central procedure in probabilistic models based on matrix factorization, for instance principal component analysis and topic models. Quantifying the uncertainty of such a decomposition based on a finite sample estimate is essential to reasoning under uncertainty when employing such models. 
This paper tackles the challenge of computing confidence bounds on the individual entries of eigenvectors of a covariance matrix of fixed dimension. 
Moreover, we derive a method to bound the entries of the inverse covariance matrix, the so-called precision matrix.
The assumptions behind our method are minimal and require that the covariance matrix exists, and its empirical estimator converges to the true covariance.
We make use of the theory of U-statistics to bound the $L_2$ perturbation of the empirical covariance matrix.
From this result, we obtain bounds on the eigenvectors using Weyl's theorem and the eigenvalue-eigenvector identity and we derive confidence intervals on the entries of the precision matrix using matrix inversion perturbation bounds. 
As an application of these results, we demonstrate a new statistical test, which allows us to test for non-zero values of the precision matrix. We compare this test to 
the well-known Fisher-z test for partial correlations, and demonstrate the soundness and scalability of the proposed statistical test, as well as its application to real-world data from medical and physics domains. }

\keywords{confidence intervals, eigendecomposition, precision matrix, distribution-independent, eigenvalue-eigenvector identity}

\maketitle

\section{Introduction}
Estimating confidence intervals on the eigendecomposition of an empirical covariance matrix $\hat \Sigma$ is significant in various applications, including dimensionality reduction techniques, canonical correlation analysis and topic modeling.
Given a finite sample $\{x_i\}_{i=1}^n$ drawn i.i.d.\ from some unknown distribution with $x_i\in \mathbb{R}^p$, the task is to find confidence intervals on the individual entries of the eigenvectors $u_j$ such that $\Sigma u_j = \lambda_j u_j $, $\Sigma$ being the \emph{population} covariance matrix to which the empirical estimate $\hat{\Sigma}$ converges as $n$ goes to infinity.
That such confidence intervals are possible is due to the recently codified eigenvalue-eigenvector identity \citep{denton2019eigenvectors}, a fundamental relationship between eigenvalues and individual entries of eigenvectors.

A closely related problem is prominent in the field of graphical model structure discovery, where there is a general interest in estimating $\hat \Sigma^{-1} = \hat \Theta$, a precision matrix (PM).
In the case of multivariate Gaussian distributions, the PM encodes the conditional dependence structure of the graphical model, i.e., non-zero entries in the precision matrix can be shown to correspond to the edges in the graph \citep{dempster1972covariance}. When the distribution is not Gaussian, such a direct link between $\Theta$ and the graphical model structure cannot be drawn, but still allows for reasoning about linear relationships among the variables. We note that in learning the structure of the graphical model, one might also want to assess the significance of the discovered relationships. Specifically, one can assess whether partial correlations, directly computable from $\Theta$, are significant.

Hypothesis testing with statistical measures of dependence is a relatively well developed field with a number of general results. Classical tests such as Spearman's $\rho$~\citep{spearman1904proof}, Kendall's $\tau$~\citep{kendall1938new}, R\'{e}nyi's $\alpha$~\citep{renyi1961measures} and Tsallis' $\alpha$~\citep{tsallis1988possible} are widely applied. Recently, for multivariate non-linear dependencies, novel statistical tests were introduced and some prominent examples include the kernel mutual information \citep{gretton2003kernel}, the generalized variance and kernel canonical correlation analysis~\citep{bach2003kernel}, the Hilbert-Schmidt independence criterion~\citep{GreBouSmoSch05}, the distance based correlation~\citep{szekely2007measuring} and rankings~\citep{heller2012consistent}. However, testing for \emph{conditional} dependence is challenging, and only few dependence measures have been generalized to the conditional case~\citep{fukumizu2007kernel,FukumizuBachJordan2009,zhang2012kernel}. The existing techniques rely on either having assumptions, such as Gaussianity of the data or sparsity of the dependence structure in the setting that the number of variables being larger than the number of samples, $p \gg n$. An alternative and robust solution in this case is the bootstrap~\citep{efron1992bootstrap}, but it is computationally challenging for large $n$.

The paper is structured as follows. First, we review existing methods both for estimating eigenvector bounds and precision matrix bounds. In the subsequent section, using the theory of U-statistics~\citep{hoeffding1948class}, we derive an estimator for the covariance of the empirical covariance estimator. In the following two sections we leverage this result to:
\begin{enumerate}[nosep]
    \item Develop a novel, statistically and computationally efficient framework for estimating the bounds on the eigenvalues and eigenvectors using the eigenvalue-eigenvector identity~\citep{denton2019eigenvectors}.
    \item Derive confidence intervals for the precision matrix using matrix inversion perturbation bounds. From these bounds, we develop a procedure for hypothesis testing of whether an entry of the precision matrix is non-zero based on a data sample from the joint distribution $P_X$. Compared to the prior art, we focus on designing a test for the case when $n \gg p$, and in particular ensure that the test has computational complexity \emph{linear} in $n$, while making two \emph{minimal} distributional assumptions
    \begin{enumerate*}[label=(\roman*)]
        \item the covariance matrix $\Sigma$ exists
        \item an unbiased estimator $\hat \Sigma$ converges to $\Sigma$.
    \end{enumerate*}
    The proposed test is asymptotically consistent without the need to set a computationally expensive bootstrap procedure or any hyperparameters.
\end{enumerate}

\section{Related work}
Three lines of work related to ours exist. Firstly, methods for estimating eigenvector bounds in the covariance matrix. Secondly, methods for computing bounds on the entries of the precision matrix.  Finally, there exist statistical tests for partial correlations. We review the relevant studies below and compare them to our work.

Multiple studies focused on studying the limiting behavior of eigenvalues and eigenvectors of covariance matrices. As such, \cite{xia2013convergence} established a convergence rate of the eigenvector empirical spectral distribution, which was recently improved by~\cite{Xi_2020}, assuming a finite \nth{8} moment condition of the underlying data distribution. Prior to that line of work,~\cite{Ledoit_2009} quantified the deviation of the sample eigenvectors from their population values by generalizing the Mar\v{c}enko-Pastur equation~\cite{Marcenko_1967} and assuming \nth{12} finite moment and extra constraints. Additional results with similar assumptions can be found in~\cite{Bai_2007,Silverstein_2000,Silverstein_1989,Silverstein_1984}.

With relation to estimating the bounds on the precision matrix, in the case of  multivariate Gaussian distribution, there exist a range of structure discovery techniques in high-dimensional settings, where the number of samples is less than the dimensionality of the data.  Estimation of such high-dimensional models has been the focus on recent research~\citep{schafer2005shrinkage,li2006gradient,meinshausen2006,banerjee2008model,Friedman01072008,ravikumar2011high} where methods impose a sparsity constraint on the entries of the inverse covariance matrix. The graphical lasso (GL)~\citep{Friedman01072008} uses $l_1$-based regularization to provide estimation of the precision matrix under some structural assumptions. The consequence of this attractive method has been the development of diverse statistical hypothesis tests \citep{g2013adaptive,lockhart2014significance,jankova2015confidence}. However, they are designed and optimized for high-dimensional settings, while we focus instead on the case where $n \gg p$.

For a low-dimensional setting,~\cite{Williams_2019} report superior performance compared to GL by using Fisher's Z-transformation to obtain confidence intervals for detecting non-zero relationships from the precision matrix.
Nonetheless, each of these methods explicitly assumes that the data distribution is multivariate Gaussian. On the other hand, our method does not impose any additional constraints on the distribution.

Testing for significant non-zero partial correlations can be done without any assumptions on the data distribution using a permutation test, which is itself a highly computationally expensive procedure. However, if one is also interested in obtaining error bounds, the only available distribution-independent method until now, to the best of our knowledge, is a bootstrap technique~\citep{efron1992bootstrap}, which becomes computationally expensive in practice when $n$ is large. 


\section{Bounding the perturbation of the covariance matrix }
\subsection{A U-statistic estimator of the cross-covariance}\label{EJS:subsec:ustatistic}
In this section we make use of the classic theory of U-statistics estimators, which allows a minimum-variance unbiased estimation of a parameter.
Most of the materials can be found in \cite{hoeffding1948class}, \cite[Chap. 5]{Serfling1981}, \cite[Chap. 6]{lehmann1999elements} and \cite{lee1990u}. 
Suppose we have a sample $\operatorname{X_m} = \{X_1, ..., X_m\}$ of size $m$ and dimension $p$ drawn i.i.d.\ from a distribution $P_X$.  
A $U$-statistics concerns an unbiased estimator of a parameter $\theta$ of $P_X$ using $\operatorname{X_m}$.
That is, $\theta$ may be represented as 
\begin{equation}
    \theta=\E[h(X_1, ..., X_m)]
\end{equation}
for some function $h$, called a kernel of order $m$.

\begin{Def}(U-statistic, \cite[Chap. 5]{Serfling1981})
Given a kernel $h$ of order $m$ and a sample $ \operatorname{X_n} = \{X_1, ..., X_n\}$ of size $n \geq m$, the corresponding $U$-statistic for estimation of $\theta$ is obtained by averaging the kernel $h$ symetrically over the observations:
\begin{equation}
\hat{U} := \dfrac{1}{(n)_m} \sum_{i^n_m} h(X_{i_1}, ..., X_{i_m})
\end{equation}
where the summation ranges over the set $i^n_m$ of all $\dfrac{n!}{(n-m)!}$ permutations of size $m$ chosen from $(1, ..., n)$ and $(n)_m$ is the Pochhammer symbol $(n)_m := \dfrac{n!}{(n-m)!}$.
\end{Def}
\begin{Def}($U$-statistic estimator of the covariance) \label{EJS:def:estimator_Ustat_of_covariance}  Let $u_r = (X_{i_r},X_{j_r})^T$ be ordered pairs of samples, with $ 1 \leq r \leq n $ and $1 \leq i, j \leq p$.  Consider $\Sigma = \Cov(X_{i},X_{j})$, the covariance functional between $X_{i}$ and $X_{j}$ and $h$, the kernel of order 2 for the functional $\Sigma$ such that
\begin{equation}
h(u_1,u_2) = \dfrac{1}{2}(X_{i_1} - X_{i_2})(X_{j_1} - X_{j_2}).
\label{EJS:eq:_kernel_Ustat_of_covariance}
\end{equation}
The corresponding $U$-statistic estimator of the covariance $\Sigma$ is
\begin{align}
\hat{\Sigma} &= 
\dfrac{1}{n-1} \sum_{r=1}^n (X_{i_r} - \bar{X_i})(X_{j_r} - \bar{X_j}) ,
\label{EJS:eq:estimator_Ustat_of_covariance}
\end{align}
where $ \bar{X_i} = \frac{1}{n}  \sum_{s=1}^n X_{is}$. $\hat{\Sigma}$ can be computed in linear time.
\end{Def}

\subsection{U-statistic based convergence of the empirical covariance matrix}

We focus here on $U$-statistic estimates of $\hat{\Sigma}$ and its asymptotic normal distribution.\footnote{For simplicity, we chose to use U-statistics and the asymptotic normality. An alternative approach would be to use a non-asymptotic result, e.g. based on Berry-Esseen \citep{feller}, however, as can be seen in Section \ref{sec:bounding_eigvectors}, we use the perturbation bounds in combination with interval arithmetic and therefore any difference in the two approaches will not result in a meaningful change in the final bounds.} We develop the full covariance between the elements of $\hat{\Sigma}$, which we denote $\Cov(\hat{\Sigma}) \in \mathbb{R}^{\frac{p(p+1)}{2} \times \frac{p(p+1)}{2}}$, where the size is due to the symmetry of $\hat{\Sigma}$. We denote $U(A)$ the function returning the upper triangular part and diagonal of a matrix $A$.

\begin{theorem}\label{aistats2021:asymptotic_normal_dist_for_Ustatistics_Cov}(Joint asymptotic normality distribution of the covariance matrix, \citep{hoeffding1948class}) 
For all $(i,j,k,l)$ range over each of the $p$ variates in a covariance matrix $\hat{\Sigma}$, if $\Var(\hat{\Sigma}_{ij}) > 0$ and $\Var(\hat{\Sigma}_{kl}) > 0$, then $\left[ \hat{\Sigma}_{ij} ; \hat{\Sigma}_{kl} \right]^T$ converges in distribution (as $n \rightarrow \infty$) to a Gaussian random variable 
\begin{align}
& n^{\frac{1}{2}}
\begin{pmatrix}
\hat{\Sigma}_{ij} - \Sigma_{ij}  \\
\hat{\Sigma}_{kl} - \Sigma_{kl}  \\
\end{pmatrix}
\overset{d}{\longrightarrow} \mathcal{N} 
\begin{pmatrix}
\begin{pmatrix}
0  \\
0  \\
\end{pmatrix}, 
K,
\end{pmatrix},
\end{align}
where 
\begin{align}
K=\begin{pmatrix} 
\Var(\hat{\Sigma}_{ij}) & \Cov(\hat{\Sigma}_{ij},\hat{\Sigma}_{kl}) \\ 
\Cov(\hat{\Sigma}_{ij},\hat{\Sigma}_{kl}) & \Var(\hat{\Sigma}_{kl})
\end{pmatrix} 
\end{align}
\end{theorem}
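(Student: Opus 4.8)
The plan is to recognize each entry $\hat{\Sigma}_{ij}$ as a U-statistic of order $m=2$ with the covariance kernel of \eqref{EJS:eq:_kernel_Ustat_of_covariance}, and then to deduce joint asymptotic normality from the classical limit theory of a \emph{single} U-statistic via the Cram\'er--Wold device. Concretely, I would fix the index quadruple $(i,j,k,l)$ and write $h^{(ij)}, h^{(kl)}$ for the two order-2 kernels, each symmetric in its two sample arguments. For any fixed $a,b \in \mathbb{R}$, the linear combination $a\hat{\Sigma}_{ij} + b\hat{\Sigma}_{kl}$ is itself the U-statistic attached to the single kernel $a\,h^{(ij)} + b\,h^{(kl)}$, since the map $h \mapsto \hat{U}$ is linear. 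By Cram\'er--Wold it therefore suffices to establish a \emph{univariate} CLT for this linear combination for every $(a,b)$ and to check that the resulting limiting variance equals $a^2 K_{11} + 2ab\,K_{12} + b^2 K_{22}$, which pins down the limiting covariance as $K$.

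The univariate CLT is Hoeffding's theorem, whose engine is the H-decomposition (the projection method). For a kernel $h$ of order $2$ with parameter $\theta = \E[h(X_1,X_2)]$, I would introduce the first projection $g_1(x) = \E[h(x,X_2)] - \theta$ and use the decomposition $\hat{U} - \theta = \tfrac{2}{n}\sum_{r=1}^n g_1(X_r) + R_n$, in which the leading term is an i.i.d.\ average and $R_n$ is the fully degenerate second-order part. A direct second-moment computation gives $\Var(R_n) = O(n^{-2})$, hence $\sqrt{n}\,R_n = o_P(1)$; the Lindeberg--L\'evy CLT applied to the i.i.d.\ sum then yields $\sqrt{n}(\hat{U}-\theta) \overset{d}{\longrightarrow} \mathcal{N}\!\left(0,\, 4\,\Var(g_1(X_1))\right)$. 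Applying this to $a\,h^{(ij)} + b\,h^{(kl)}$ produces a scalar normal limit with variance $4\,\Var\!\big(a\,g_1^{(ij)}(X_1) + b\,g_1^{(kl)}(X_1)\big)$, which is precisely the quadratic form above once we set $K_{11} = 4\Var(g_1^{(ij)})$, $K_{22} = 4\Var(g_1^{(kl)})$ and $K_{12} = 4\Cov(g_1^{(ij)}, g_1^{(kl)})$.

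To make the identification with the $K$ of the statement transparent, I would compute the projection explicitly for the covariance kernel. Writing $\mu_i = \E[X_i]$, a short calculation gives $g_1^{(ij)}(X) = \tfrac{1}{2}\big[(X_i-\mu_i)(X_j-\mu_j) - \Sigma_{ij}\big]$, so that $K_{11} = \Var\big[(X_i-\mu_i)(X_j-\mu_j)\big]$ and analogously for the other entries; these are exactly the (asymptotic) variances and covariance labelling $K$. The non-degeneracy hypotheses $\Var(\hat{\Sigma}_{ij})>0$ and $\Var(\hat{\Sigma}_{kl})>0$ guarantee $K_{11}, K_{22} > 0$, so the limiting law is a genuine, non-trivial bivariate Gaussian in each coordinate.

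The main obstacle is the control of the remainder $R_n$: one must verify that the degenerate second-order U-statistic contributes only at order $n^{-1}$ to the standard deviation and thus disappears after scaling by $\sqrt{n}$. This needs square-integrability of the kernel, $\E[h^2] < \infty$, i.e.\ finite fourth moments of $X$ — slightly stronger than the paper's standing assumptions that $\Sigma$ exists and $\hat{\Sigma}$ converges, so I would either invoke these moment conditions explicitly or appeal to Hoeffding's theorem as a black box, as the statement's citation does. A secondary subtlety is that for particular directions $(a,b)$ the projection variance may vanish, giving a degenerate one-dimensional limit along that direction; this does not affect the Cram\'er--Wold conclusion, since the theorem asserts only the positivity of the diagonal of $K$.
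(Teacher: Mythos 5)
The paper gives no proof of this theorem at all --- it is imported verbatim as a citation to Hoeffding (1948) --- and your argument is exactly the classical proof behind that citation: the H-decomposition reduces each order-2 U-statistic to an i.i.d.\ average of first projections plus a degenerate remainder with $\Var(R_n)=O(n^{-2})$, and the Cram\'er--Wold device (valid here because both statistics are built on the \emph{same} sample, so $a\hat{\Sigma}_{ij}+b\hat{\Sigma}_{kl}$ is again an order-2 U-statistic with kernel $a\,h^{(ij)}+b\,g^{(kl)}$, which remains symmetric) upgrades the scalar CLT to the bivariate one. Your explicit projection $g_1^{(ij)}(X)=\tfrac{1}{2}\left[(X_i-\mu_i)(X_j-\mu_j)-\Sigma_{ij}\right]$ is correct and consistent with the paper's Theorem~2, where $n\Cov(\hat{\Sigma}_{ij},\hat{\Sigma}_{kl})\rightarrow 4\zeta_1$; your caveat that the argument needs $\E[h^2]<\infty$, i.e.\ finite fourth moments of $X$ --- strictly stronger than the paper's blanket assumptions, but implicit in demanding that the entries of $K$ exist and that $\Var(\hat{\Sigma}_{ij})>0$ --- is accurate, as is your observation that degenerate directions $(a,b)$ do not obstruct Cram\'er--Wold.
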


We note respectively $h$ and $g$ the corresponding kernels of order 2 for the two unbiased estimates $\hat{\Sigma}_{ij}$ and  $\hat{\Sigma}_{kl}$, where $h(u_1,u_2) = \dfrac{1}{2} \left( X_{i_1} - X_{i_2} \right)  \left( X_{j_1} - X_{j_2} \right)$
with $u_r=(X_{i_r}$, $X_{j_r})^T$ and $g(v_1,v_2) = \dfrac{1}{2} \left( X_{k_1} - X_{k_2} \right)  \left( X_{l_1} - X_{l_2} \right)$ with $v_r=(X_{k_r},X_{l_r})^T$. Then, we state the following theorem.

\begin{theorem}\label{aistats2021:theorem:covariance_covariance_ustatistic}
(Covariance of the $U$-statistic for the covariance matrix)
The low variance, unbiased estimates of the covariance between two $U$-statistics estimates $\hat{\Sigma}_{ij}$ and $\hat{\Sigma}_{kl}$, where $(i,j,k,l)$ range over each of the $p$ variates in a covariance matrix of $\hat{\Sigma}$ $\Cov(\hat{\Sigma}):=\Cov(\hat{\Sigma}_{ij},\hat{\Sigma}_{kl})$ is

\begin{align}
\Cov(\hat{\Sigma}_{ij},\hat{\Sigma}_{kl})=\binom{n}{2}^{-1} \left( 2 (n-2) \zeta_1  \right) + \mathcal{O}(n^{-2}),
\label{aistats2021:eq:Covariance_of_Ustat_estimator}
\end{align}
where $\zeta_1 = \Cov \left(  \E_{u_2}[h(u_1,u_2)], \E_{v_2}[g(v_1,v_2)] \right)$. 
There are seven exhaustive cases, derived in
Appendix~\ref{aistats2021:appendix:subsec:derivation_sevencases},
which are used to estimate Eq.~\eqref{aistats2021:eq:Covariance_of_Ustat_estimator} for all $1 \leq i,j,k,l \leq p$ through simple variable substitution.  Each of these cases has computation linear in $n$. 
\end{theorem}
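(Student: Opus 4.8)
The plan is to apply the classical Hoeffding decomposition for the covariance of two U-statistics of the same order $m=2$. First I would exploit the symmetry of the kernels to rewrite both estimators over unordered index pairs, i.e.\ $\hat{\Sigma}_{ij} = \binom{n}{2}^{-1} \sum_{s<t} h(u_s,u_t)$ and $\hat{\Sigma}_{kl} = \binom{n}{2}^{-1} \sum_{s'<t'} g(v_{s'},v_{t'})$, which is equivalent to the ordered-tuple definition since $(n)_2 = 2\binom{n}{2}$ and $h,g$ are symmetric in their arguments. Expanding the covariance bilinearly then gives
\begin{equation}
\Cov(\hat{\Sigma}_{ij},\hat{\Sigma}_{kl}) = \binom{n}{2}^{-2} \sum_{s<t}\sum_{s'<t'} \Cov\bigl(h(u_s,u_t),\, g(v_{s'},v_{t'})\bigr).
\end{equation}

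The central observation is that, because the underlying samples are i.i.d., each summand depends only on the number $c = |\{s,t\}\cap\{s',t'\}|$ of shared sample indices; note that $u_r$ and $v_r$ are built from the same draw $X_r$ but different coordinates, so a shared index induces dependence. I would then treat the three cases $c\in\{0,1,2\}$ separately. For $c=0$ the two kernels are functions of disjoint, independent blocks of samples, so their covariance vanishes. For $c=1$, conditioning on the single shared sample and invoking the tower property factorizes the expectation through the one-argument projections $\E_{u_2}[h(u_1,u_2)]$ and $\E_{v_2}[g(v_1,v_2)]$, yielding exactly $\zeta_1 = \Cov(\E_{u_2}[h(u_1,u_2)],\E_{v_2}[g(v_1,v_2)])$. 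For $c=2$ the summand equals $\zeta_2 = \Cov(h(u_1,u_2),g(v_1,v_2))$.

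The remaining work is combinatorial bookkeeping: among the terms of the double sum there are $\binom{n}{2}\cdot 2(n-2)$ sharing exactly one index and $\binom{n}{2}$ sharing both, so the sum collapses to $\binom{n}{2}^{-1}\bigl(2(n-2)\zeta_1 + \zeta_2\bigr)$. Since $\binom{n}{2}^{-1}\zeta_2 = \mathcal{O}(n^{-2})$, the $\zeta_2$ contribution is absorbed into the error term, giving the claimed expression. I do not expect any of these steps to be the true bottleneck; the genuinely laborious part is turning $\zeta_1$ into an explicit, computable quantity. Substituting the covariance kernel $h(u_1,u_2)=\tfrac12(X_{i_1}-X_{i_2})(X_{j_1}-X_{j_2})$ into the projection and evaluating $\Cov(\E_{u_2}[h],\E_{v_2}[g])$ forces one to enumerate how the coordinate indices $(i,j,k,l)$ may coincide, which is precisely what generates the seven exhaustive patterns; I would relegate that moment-matching enumeration to the appendix and carry it out there via direct expectation computations with variable substitution, each being linear in $n$.
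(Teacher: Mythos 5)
Your proposal is correct and follows essentially the same route as the paper: the displayed formula is exactly the classical Hoeffding covariance decomposition for two order-2 $U$-statistics, which the paper simply imports from Hoeffding (1948) in its appendix and which you re-derive correctly via the shared-index count $c\in\{0,1,2\}$ (the $\binom{n}{2}\cdot 2(n-2)$ one-overlap terms giving $\zeta_1$, the $\binom{n}{2}$ two-overlap terms giving $\zeta_2$, and $\binom{n}{2}^{-1}\zeta_2=\mathcal{O}(n^{-2})$ absorbed into the error term). The one piece you defer---verifying that the coincidence patterns of $(i,j,k,l)$ collapse to exactly seven inequivalent cases, each computable in $\mathcal{O}(n)$ from sample moments---is precisely the enumeration the paper also relegates to its appendix (carried out there by a computer-assisted reduction using the symmetries of $\Cov$), so your structure matches the paper's own.
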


\begin{lemma}\label{aistats2021:lemma:norm2_less_normfro}
With probability at least $1-\delta$, we (asymptotically) have the two following inequalities
\begin{align}
\Vert \Sigma - \hat{\Sigma} \Vert_2 &\leq \sqrt{2 \lambda_{\max}} \Phi^{-1} \left( 1 - \delta/2 \right)  \label{aistats2021:eq:eigenvalueBound}\\
& \leq \sqrt{2 \Tr[\Cov(\hat{\Sigma})]} \Phi^{-1} \left( 1 - \delta/2 \right) \label{aistats2021:eq:TraceBound}
\end{align}
where $\Phi(\cdot)$ is the CDF of $\mathcal{N}(0,1)$ and $\lambda_{\max}$ is the largest eigenvalue of $\Cov(\hat{\Sigma})$. 
\end{lemma}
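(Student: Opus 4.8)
The plan is to prove the two displayed inequalities separately, and the factor $\sqrt{2}$ will emerge from a clean quadratic-form computation. Write $E := \hat{\Sigma} - \Sigma$, which is symmetric, and let $C := \Cov(\hat{\Sigma})$ be the positive semidefinite covariance of the upper-triangular entries. Since $E$ is symmetric, its spectral norm has the variational form $\Vert E \Vert_2 = \max_{\Vert v \Vert = 1} |v^T E v|$, attained at a unit eigenvector $v^\ast$ of $E$. My idea is to control the scalar $v^T E v$ for a \emph{fixed} direction $v$ and then read off the threshold at $v = v^\ast$. Fixing $v$ and letting $e$ denote the vector of diagonal-and-upper-triangular entries of $E$ (i.e.\ $U(E)$ vectorised), I expand $v^T E v = \sum_i v_i^2 E_{ii} + 2\sum_{i<j} v_i v_j E_{ij} = a^T e$, where $a_{ii} = v_i^2$ and $a_{ij} = 2 v_i v_j$.

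By Theorem~\ref{aistats2021:asymptotic_normal_dist_for_Ustatistics_Cov}, the entries of $E = \hat{\Sigma} - \Sigma$ are jointly asymptotically Gaussian with mean zero and covariance $C$, so the linear combination $a^T e$ is asymptotically $\mathcal{N}(0,\sigma_v^2)$ with $\sigma_v^2 = a^T C a$. The crucial step is a \emph{uniform} variance bound. Using $\sum_i v_i^2 = 1$, a short computation gives $\Vert a \Vert^2 = \sum_i v_i^4 + 4\sum_{i<j} v_i^2 v_j^2 = 2 - \sum_i v_i^4 \le 2$, whence $\sigma_v^2 \le \lambda_{\max} \Vert a \Vert^2 \le 2\lambda_{\max}$ for \emph{every} unit $v$. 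This is exactly where the $\sqrt{2}$ enters. A two-sided Gaussian tail bound then yields, for each fixed $v$, that $|v^T E v| \le \sigma_v\, \Phi^{-1}(1-\delta/2)$ with probability $1-\delta$; since $\sigma_v \le \sqrt{2\lambda_{\max}}$, the common threshold $\sqrt{2\lambda_{\max}}\,\Phi^{-1}(1-\delta/2)$ controls $|v^T E v|$ with probability at least $1-\delta$. Reading this off at the maximizing direction $v^\ast$ produces \eqref{aistats2021:eq:eigenvalueBound}.

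For the second inequality \eqref{aistats2021:eq:TraceBound} it suffices to show $\lambda_{\max} \le \Tr[C]$, which is what the lemma's name records. As $C$ is a covariance matrix it is positive semidefinite, so writing its eigenvalues as $\mu_1 \ge \mu_2 \ge \cdots \ge 0$ gives the chain $\lambda_{\max} = \mu_1 = \Vert C \Vert_2 \le \Vert C \Vert_F = \sqrt{\sum_k \mu_k^2} \le \sum_k \mu_k = \Tr[C]$, i.e.\ $\Vert C \Vert_2 \le \Vert C \Vert_F \le \Tr[C]$. Taking square roots and multiplying by the nonnegative quantile $\Phi^{-1}(1-\delta/2)$ (nonnegative since $\delta \le 1$) preserves the ordering and gives the claim.

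The hard part is the rigour of the final passage in the first inequality: $v^\ast$ is itself a function of the random matrix $E$, so the fixed-direction Gaussian tail bound cannot be applied to it verbatim. Formally $\Vert E \Vert_2$ is the supremum of a Gaussian field over the unit sphere, not a single marginal, and a naive union bound over the sphere is vacuous; a fully rigorous argument would require an $\varepsilon$-net or chaining bound, which introduces dimension-dependent factors and destroys the clean quantile form. The paper instead proceeds asymptotically and absorbs this slack into the subsequent interval arithmetic (cf.\ the footnote accompanying Theorem~\ref{aistats2021:asymptotic_normal_dist_for_Ustatistics_Cov}). Accordingly, I would present the uniform marginal-variance estimate $\sigma_v^2 \le 2\lambda_{\max}$ as the real content of the lemma and explicitly flag the evaluation at the data-dependent $v^\ast$ as the heuristic, asymptotic step.
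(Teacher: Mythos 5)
Your fixed-direction computation is correct and genuinely elegant---in particular $\|a\|^2 = 2 - \sum_i v_i^4 \le 2$ does produce the right constant for each fixed unit $v$---but the final passage is a genuine gap, and it is not the step the paper takes. As you flag yourself, $v^\ast$ is a function of the random matrix $E$, so the marginal tail bound for fixed $v$ says nothing about $|(v^\ast)^T E v^\ast| = \|E\|_2$: uniform control of the marginal variances ($\sigma_v^2 \le 2\lambda_{\max}$ for every unit $v$) does not control the supremum of the Gaussian field $v \mapsto a_v^T e$ over the sphere, and repairing it would indeed require an $\varepsilon$-net or chaining argument with dimension-dependent inflation. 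Your guess that the paper ``absorbs this slack'' asymptotically is also wrong: the paper never forms a supremum over directions at all. Its proof reduces the operator norm \emph{deterministically}, via $\|\Sigma - \hat{\Sigma}\|_2 \le \|\Sigma - \hat{\Sigma}\|_F \le \sqrt{2}\, \|U(\Sigma) - U(\hat{\Sigma})\|_2$, where the $\sqrt{2}$ arises from the doubling of off-diagonal entries in the Frobenius norm (this norm comparison on $\Sigma - \hat{\Sigma}$ is what the lemma's label records, not the trace step), and then applies the quantile bound once, to the norm of the single asymptotically Gaussian vector $U(\Sigma) - U(\hat{\Sigma})$ with covariance $\Cov(\hat{\Sigma})$. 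That step involves no data-dependent direction. (One may object that using the scalar quantile $\Phi^{-1}(1-\delta/2)$ with $\sqrt{\lambda_{\max}}$ for the norm of a $p(p+1)/2$-dimensional Gaussian vector is itself loose unless the spectrum of $\Cov(\hat{\Sigma})$ is dominated by its top eigenvalue---a chi-square quantile or the trace-based variance would be the airtight choice---but since the vector is fixed, that step can be tightened without changing the structure of the argument, whereas your evaluation at $v^\ast$ cannot be repaired without nets and dimension factors.)

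Your second inequality is correct and essentially the paper's: it amounts to $\lambda_{\max} \le \Tr[\Cov(\hat{\Sigma})]$, which holds because the trace of a positive semidefinite matrix dominates each eigenvalue; your detour through $\|C\|_2 \le \|C\|_F \le \Tr[C]$ is valid but unnecessary. In summary: the trace bound stands; the first display, as you present it, rests on an explicitly heuristic evaluation at the random maximizer, and the correct fix is not chaining but the paper's Frobenius-norm reduction---under which the $\sqrt{2}$ appears for a different (combinatorial) reason than in your quadratic-form calculation.
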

\begin{proof}
As $\hat{\Sigma}$ is a $U$-statistic, we have that $U(\hat{\Sigma})$, a vector containing its upper diagonal component (including the diagonal), is Gaussian distributed with covariance $\Cov(\hat{\Sigma})$ (cf.~Thm~\ref{aistats2021:asymptotic_normal_dist_for_Ustatistics_Cov},~\ref{aistats2021:theorem:covariance_covariance_ustatistic}).  Therefore, with probability at least $1-\delta$, \begin{equation}\label{NIPS2016:eq:largesteigenvalueBound}
\| U(\Sigma) - U(\hat{\Sigma}) \|_2 \leq \sqrt{\lambda_{\text{max}}} \Phi^{-1}\left(1 - \delta/2 \right)
\end{equation}
and furthermore
\begin{equation}
\| \Sigma - \hat{\Sigma} \|_F \leq \sqrt{2} \| U(\Sigma) - U(\hat{\Sigma}) \|_2
\end{equation}
which combined with the fact that $\| \cdot \|_2 \leq \| \cdot \|_F$ yields the desired result. 
\end{proof}

\section{Bounding the eigenvectors of the covariance matrix}
\label{sec:bounding_eigvectors}
\begin{theorem}[Eigenvalue-eigenvector identity \citep{denton2019eigenvectors}]
    \label{aistats2021:eigenvalue_eigenvector_identity}
    Let $\Sigma$ be a $p\times p$ Hermitian matrix.
    Denote by $V_{ij}$ the $j$th component of the $i$th eigenvector of $\Sigma$, and $\lambda_i(\Sigma)$ be the $i$th eigenvalue.  The following identity holds:
\begin{equation}
    |V_{ij}|^2 \prod_{k=1; k\neq i}^p\left(\lambda_i(\Sigma)-\lambda_k(\Sigma)\right) = \prod_{k=1}^{p-1} \left( \lambda_i(\Sigma) - \lambda_{k}(M_j) \right), \label{eq:EigenvalueEigenvectorIdentity}
\end{equation}

where $M_j$ is the $p-1\times p-1$ minor formed from $\Sigma$ by deleting its $j$th row and column.
\end{theorem}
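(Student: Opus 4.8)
The plan is to derive the identity by computing the $(j,j)$ entry of the adjugate matrix $\operatorname{adj}(\lambda I - \Sigma)$ in two independent ways and then evaluating at $\lambda = \lambda_i(\Sigma)$. First I would recall the defining relation $(\lambda I - \Sigma)\,\operatorname{adj}(\lambda I - \Sigma) = \det(\lambda I - \Sigma)\, I = p_\Sigma(\lambda)\, I$, where $p_\Sigma(\lambda) = \prod_{k=1}^p (\lambda - \lambda_k(\Sigma))$ is the characteristic polynomial. The crucial structural observation is that every entry of $\operatorname{adj}(\lambda I - \Sigma)$ is itself a polynomial in $\lambda$, so all the equalities below are polynomial identities that hold for every $\lambda$, in particular at the eigenvalues where $\lambda I - \Sigma$ is singular.

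For the right-hand side, I would use that the $(j,j)$ entry of the adjugate equals the $(j,j)$ cofactor of $\lambda I - \Sigma$. Since the sign is $(-1)^{j+j} = 1$, this cofactor is the determinant of the submatrix obtained by deleting row and column $j$, namely $\det(\lambda I_{p-1} - M_j) = \prod_{k=1}^{p-1}(\lambda - \lambda_k(M_j))$. Evaluating at $\lambda = \lambda_i(\Sigma)$ recovers exactly the right-hand side of Eq.~\eqref{eq:EigenvalueEigenvectorIdentity}.

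For the left-hand side I would invoke the spectral decomposition. Since $\Sigma$ is Hermitian, write $\Sigma = \sum_k \lambda_k v_k v_k^*$ with orthonormal eigenvectors, so that $\lambda I - \Sigma = \sum_k (\lambda - \lambda_k) v_k v_k^*$. For $\lambda$ outside the spectrum, the resolvent is $(\lambda I - \Sigma)^{-1} = \sum_k (\lambda - \lambda_k)^{-1} v_k v_k^*$, and multiplying by $p_\Sigma(\lambda)$ yields
\begin{equation}
\operatorname{adj}(\lambda I - \Sigma) = \sum_{k=1}^p \Bigl( \prod_{m \neq k}(\lambda - \lambda_m) \Bigr) v_k v_k^*.
\end{equation}
Both sides are polynomials in $\lambda$, so this extends to all $\lambda$. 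Setting $\lambda = \lambda_i$, every summand with $k \neq i$ carries the factor $(\lambda_i - \lambda_i) = 0$, leaving only the $k=i$ term, whose $(j,j)$ entry is $\bigl(\prod_{m \neq i}(\lambda_i - \lambda_m)\bigr)\,|V_{ij}|^2$. Equating this with the cofactor expression from the previous step gives the claimed identity.

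The main obstacle, and the step requiring the most care, is the passage to $\lambda = \lambda_i$: the resolvent $(\lambda I - \Sigma)^{-1}$ blows up there, so one cannot simply substitute. The resolution is to treat $\operatorname{adj}(\lambda I - \Sigma)$ as a matrix of polynomials, which is defined everywhere, and to invoke the polynomial identity principle to pass from the nonsingular range to the eigenvalue. A secondary subtlety is the degenerate case of repeated eigenvalues: if $\lambda_i$ has multiplicity greater than one, then the factor $\prod_{m \neq i}(\lambda_i - \lambda_m)$ vanishes and $V_{ij}$ is not uniquely determined, so both sides are zero and the identity is vacuous; the content of the statement lies in the simple-eigenvalue case, which the argument above handles directly.
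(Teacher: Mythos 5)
Your proof is correct. Note that the paper itself states this theorem without proof, quoting it directly from the cited reference \citep{denton2019eigenvectors}, and your argument---computing the $(j,j)$ entry of $\operatorname{adj}(\lambda I - \Sigma)$ once as the cofactor $\det(\lambda I_{p-1} - M_j)$ and once via the spectral decomposition, then using the polynomial identity principle to evaluate at $\lambda = \lambda_i$---is precisely the standard ``adjugate proof'' given in that reference, so there is no methodological divergence to report. One small refinement to your closing remark: in the repeated-eigenvalue case the identity is not merely vacuous but provably $0=0$ by your own argument, since at $\lambda = \lambda_i$ \emph{every} term of the spectral sum $\sum_k \bigl(\prod_{m\neq k}(\lambda-\lambda_m)\bigr) v_k v_k^*$ acquires a vanishing factor, forcing the cofactor side $\prod_{k=1}^{p-1}(\lambda_i - \lambda_k(M_j))$ to vanish as well (equivalently, by Cauchy interlacing); this matches the paper's observation in its discussion section that Equation~\eqref{eq:EigenvalueEigenvectorIdentity} degenerates to $0=0$ for repeated eigenvalues.
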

Assume that we have a perturbed observation $\hat{\Sigma}$ satisfying $\|\hat{\Sigma} - \Sigma \|_2 \leq \varepsilon$ with high probability for some known $\varepsilon$.  We may construct such a $\varepsilon$, e.g.\ when estimating a covariance matrix from a finite sample.
\begin{theorem}[Weyl's Theorem \citep{Weyl1912}]\label{aistats2021:weyls_theorem}
For two positive definite matrices $\hat{\Sigma}$ and $\Sigma$, if
\begin{equation}
    |\lambda_k(\hat{\Sigma}) - \lambda_k(\Sigma)| \leq \|\hat{\Sigma}-\Sigma\|_2\leq \varepsilon
\end{equation}

where $0<\varepsilon<\lambda_k(\Sigma)\ \forall k$, then
\begin{align}
\lambda_k(\hat{\Sigma}) - \varepsilon \leq \lambda_k(\Sigma) \leq \lambda_k(\hat{\Sigma}) + \varepsilon \ \forall k.
\end{align}\label{aistats2021:eigenvalues_bounds_sigma}
\end{theorem}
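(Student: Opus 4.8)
The plan is to prove the underlying Weyl inequality $|\lambda_k(\hat{\Sigma}) - \lambda_k(\Sigma)| \leq \|\hat{\Sigma} - \Sigma\|_2$ via the Courant--Fischer min--max characterization of eigenvalues, after which the stated two-sided bound follows by a one-line rearrangement using $\|\hat{\Sigma} - \Sigma\|_2 \leq \varepsilon$. Write $E := \hat{\Sigma} - \Sigma$, which is Hermitian since both matrices are, and fix the convention $\lambda_1 \geq \cdots \geq \lambda_p$ for all three matrices. The single fact I would lean on is that, for Hermitian $E$, $\|E\|_2 = \max_{\|x\|=1} |x^{*} E x|$, so that for every unit vector $x$ one has $-\|E\|_2 \leq x^{*} E x \leq \|E\|_2$, and hence $x^{*}\Sigma x - \|E\|_2 \leq x^{*}\hat{\Sigma} x \leq x^{*}\Sigma x + \|E\|_2$.

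First I would invoke Courant--Fischer, $\lambda_k(A) = \max_{\dim S = k}\, \min_{x \in S,\, \|x\|=1} x^{*} A x$, applied to $A = \hat{\Sigma}$. Substituting the upper bound on the Rayleigh quotient above, and noting that adding the constant $\|E\|_2$ commutes with both the inner minimum and the outer maximum, gives $\lambda_k(\hat{\Sigma}) \leq \lambda_k(\Sigma) + \|E\|_2$. Running the identical argument with the roles of $\hat{\Sigma}$ and $\Sigma$ exchanged (equivalently, using the lower bound on the Rayleigh quotient) yields $\lambda_k(\hat{\Sigma}) \geq \lambda_k(\Sigma) - \|E\|_2$. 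Combining the two gives the Weyl inequality $|\lambda_k(\hat{\Sigma}) - \lambda_k(\Sigma)| \leq \|E\|_2 = \|\hat{\Sigma} - \Sigma\|_2$ for every $k$.

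To finish, I would chain this with the hypothesis $\|\hat{\Sigma} - \Sigma\|_2 \leq \varepsilon$ to obtain $|\lambda_k(\hat{\Sigma}) - \lambda_k(\Sigma)| \leq \varepsilon$, and then simply unfold the absolute value into $\lambda_k(\hat{\Sigma}) - \varepsilon \leq \lambda_k(\Sigma) \leq \lambda_k(\hat{\Sigma}) + \varepsilon$, which is exactly the claim. The positive-definiteness assumption together with $0 < \varepsilon < \lambda_k(\Sigma)$ plays no role in the derivation of the inequality itself; it only guarantees that the lower endpoint $\lambda_k(\hat{\Sigma}) - \varepsilon$ stays positive, which is what makes the resulting interval usable in the downstream eigenvalue--eigenvector identity of Theorem~\ref{aistats2021:eigenvalue_eigenvector_identity}.

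I do not expect a genuine obstacle here, since the result is classical; the only point demanding care is bookkeeping. One must keep a single consistent ordering convention across $\Sigma$, $\hat{\Sigma}$, and $E$, and must justify that a constant shift of the Rayleigh quotient passes cleanly through the nested $\max$--$\min$ (it does, because the shift is independent of both $x$ and the subspace $S$). An alternative route, should one prefer to avoid the min--max machinery, is to note $\hat{\Sigma} = \Sigma + E$ and apply the additive form of Weyl's inequality $\lambda_k(\Sigma) + \lambda_p(E) \leq \lambda_k(\Sigma + E) \leq \lambda_k(\Sigma) + \lambda_1(E)$ together with $\max(|\lambda_1(E)|, |\lambda_p(E)|) = \|E\|_2$, which is the same argument repackaged.
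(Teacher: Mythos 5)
Your proof is correct: the paper states this result as a classical citation to \citep{Weyl1912} and gives no proof of its own, and your Courant--Fischer argument (together with the observation that $\|E\|_2=\max_{\|x\|=1}|x^{*}Ex|$ for Hermitian $E$, so the constant shift passes through the nested $\max$--$\min$) is the standard, complete derivation. You are also right on the one point worth flagging: positive definiteness and $0<\varepsilon<\lambda_k(\Sigma)$ are not needed for the inequality itself, but only to keep $\lambda_k(\hat{\Sigma})-\varepsilon$ positive for the downstream use in Corollary~\ref{aistats2021:corollary:eigenvalues_bounds_pm} and Proposition~\ref{aistats2021:eigenvector_bounds}.
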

\begin{corollary}
\label{aistats2021:corollary:eigenvalues_bounds_pm}
If $\|\hat{\Sigma}-\Sigma\|_2\leq \varepsilon$,
\begin{align}
    (\lambda_k(\hat{\Sigma}) + \varepsilon)^{-1} \leq \lambda_k(\Sigma)^{-1}.
\end{align}
Furthermore, if $\lambda_{k}(\hat{\Sigma}) > \varepsilon$,
    \begin{align}
        \lambda_k(\Sigma)^{-1} \leq (\lambda_k(\hat{\Sigma}) - \varepsilon)^{-1} .
    \end{align}
\end{corollary}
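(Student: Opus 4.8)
The plan is to obtain both inequalities directly from the two-sided bound supplied by Weyl's theorem (Theorem~\ref{aistats2021:weyls_theorem}) by applying the map $t \mapsto t^{-1}$, which is strictly decreasing on the positive reals and hence reverses inequalities among positive quantities. First I would invoke Weyl's theorem, which under the hypothesis $\|\hat{\Sigma}-\Sigma\|_2 \leq \varepsilon$ gives, for every index $k$, the chain
\[
\lambda_k(\hat{\Sigma}) - \varepsilon \leq \lambda_k(\Sigma) \leq \lambda_k(\hat{\Sigma}) + \varepsilon .
\]
Since both $\hat{\Sigma}$ and $\Sigma$ are positive definite, $\lambda_k(\Sigma) > 0$ and $\lambda_k(\hat{\Sigma}) > 0$, so the reciprocal $\lambda_k(\Sigma)^{-1}$ is well defined throughout.

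For the first inequality, the quantity $\lambda_k(\hat{\Sigma}) + \varepsilon$ is strictly positive as the sum of a positive eigenvalue and $\varepsilon > 0$; taking reciprocals in the upper bound $\lambda_k(\Sigma) \leq \lambda_k(\hat{\Sigma}) + \varepsilon$ reverses the inequality and yields $(\lambda_k(\hat{\Sigma}) + \varepsilon)^{-1} \leq \lambda_k(\Sigma)^{-1}$, requiring no extra assumption. For the second inequality the only additional requirement is that the expression being inverted be positive: the hypothesis $\lambda_k(\hat{\Sigma}) > \varepsilon$ guarantees $\lambda_k(\hat{\Sigma}) - \varepsilon > 0$, so applying the reciprocal to the lower bound $\lambda_k(\hat{\Sigma}) - \varepsilon \leq \lambda_k(\Sigma)$ gives $\lambda_k(\Sigma)^{-1} \leq (\lambda_k(\hat{\Sigma}) - \varepsilon)^{-1}$.

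There is essentially no obstacle here, and the argument is routine. The single point demanding attention—and precisely the reason the side condition $\lambda_k(\hat{\Sigma}) > \varepsilon$ is imposed in the second part—is positivity: reciprocation is order-reversing only across a fixed sign, so every quantity to which $t \mapsto t^{-1}$ is applied must be strictly positive. The first bound needs no such caveat because $\lambda_k(\hat{\Sigma}) + \varepsilon > 0$ holds automatically, whereas the second genuinely relies on $\varepsilon$ being small enough relative to $\lambda_k(\hat{\Sigma})$ so that the lower Weyl bound remains positive and its reciprocal stays finite.
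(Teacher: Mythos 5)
Your proof is correct and follows exactly the route the paper intends: the corollary is stated without proof precisely because it is immediate from Weyl's two-sided eigenvalue bound via the order-reversing map $t \mapsto t^{-1}$ on positive reals, with the side condition $\lambda_k(\hat{\Sigma}) > \varepsilon$ ensuring positivity of the inverted lower bound, just as you argue.
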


\begin{proposition}\label{aistats2021:eigenvector_bounds}
Assuming that $\|\hat{\Sigma}-\Sigma\|_2\leq \varepsilon$ and the fact that the eigenvectors are orthonormal, it follows simultanously for all $(i, j)$ that
\begin{equation}
   |V_{ij}|^2  
   \leq
   \begin{cases}
    1 & \text{ if } \exists k\neq i: |\lambda_i(\hat{\Sigma})-\lambda_k(\hat{\Sigma)}| \leq 2\varepsilon \\
   \min (\alpha_{ij}, 1) & \text{otherwise},
   \end{cases}
\end{equation}
and 
\begin{equation}
    |V_{ij}|^2  \geq \begin{cases}
    0 & \text{ if } \exists k : | \lambda_i(\hat\Sigma) - \lambda_{k}(\hat{M}_j) | \leq 2 \varepsilon \\
    \beta_{ij} & \text{ otherwise},
    \end{cases} \label{eq:MainEigenvectorLB}
\end{equation}
where
\begin{equation}
    \alpha_{ij} = \frac{\prod_{k=1}^{p-1} \left( |\lambda_i(\hat{\Sigma}) - \lambda_{k}(\hat{M}_j)| + 2\varepsilon \right)}{\prod_{k=1; k\neq i}^p\left(|\lambda_i(\hat{\Sigma})-\lambda_k(\hat{\Sigma})| - 2\varepsilon \right)} \\
    \beta_{ij} = \frac{\prod_{k=1}^{p-1} \left( |\lambda_i(\hat\Sigma) - \lambda_{k}(\hat{M}_j)| - 2 \varepsilon  \right)}{\prod_{k=1; k\neq i}^p\left(|\lambda_i(\Sigma)-\lambda_k(\Sigma)| + 2\varepsilon\right) }
\end{equation}
\end{proposition}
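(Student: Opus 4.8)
The plan is to reduce the claim to elementary interval arithmetic on the individual factors of the eigenvalue--eigenvector identity (Theorem~\ref{aistats2021:eigenvalue_eigenvector_identity}), after first rewriting that identity in a sign-free form. Order the eigenvalues of $\Sigma$ decreasingly. By Cauchy interlacing the eigenvalues $\lambda_k(M_j)$ of the principal minor $M_j$ interlace those of $\Sigma$, so that in both $\prod_{k=1;k\neq i}^{p}(\lambda_i(\Sigma)-\lambda_k(\Sigma))$ and $\prod_{k=1}^{p-1}(\lambda_i(\Sigma)-\lambda_k(M_j))$ exactly $i-1$ factors are negative. These signs cancel in the ratio, so Theorem~\ref{aistats2021:eigenvalue_eigenvector_identity} can be recast as
\begin{equation}
|V_{ij}|^2 = \frac{\prod_{k=1}^{p-1}\left|\lambda_i(\Sigma)-\lambda_k(M_j)\right|}{\prod_{k=1;k\neq i}^{p}\left|\lambda_i(\Sigma)-\lambda_k(\Sigma)\right|},
\end{equation}
which is the form matched by $\alpha_{ij}$ and $\beta_{ij}$ and in which every factor is nonnegative, so that numerator and denominator may be bounded independently.

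Next I would control each factor under the perturbation. Since $\hat M_j$ and $M_j$ are principal submatrices of $\hat\Sigma$ and $\Sigma$, their difference is a compression of $\hat\Sigma-\Sigma$, whence $\|\hat M_j - M_j\|_2 \leq \|\hat\Sigma-\Sigma\|_2 \leq \varepsilon$. Applying Weyl's theorem (Theorem~\ref{aistats2021:weyls_theorem}) to the pairs $(\Sigma,\hat\Sigma)$ and $(M_j,\hat M_j)$ gives $|\lambda_i(\Sigma)-\lambda_i(\hat\Sigma)|\leq\varepsilon$ and $|\lambda_k(M_j)-\lambda_k(\hat M_j)|\leq\varepsilon$ for all indices. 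The triangle inequality then places each difference within $2\varepsilon$ of its empirical counterpart, so that $\left|\lambda_i(\Sigma)-\lambda_k(\Sigma)\right|$ is at most $|\lambda_i(\hat\Sigma)-\lambda_k(\hat\Sigma)|+2\varepsilon$ and at least $|\lambda_i(\hat\Sigma)-\lambda_k(\hat\Sigma)|-2\varepsilon$ when the latter is positive (and at least $0$ otherwise); the analogous two-sided bounds hold for $\left|\lambda_i(\Sigma)-\lambda_k(M_j)\right|$ with $\hat M_j$.

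To assemble the upper bound I substitute the per-factor upper bound into the numerator and the per-factor lower bound into the denominator, which yields $\alpha_{ij}$. The denominator lower bound is legitimate only when every empirical gap obeys $|\lambda_i(\hat\Sigma)-\lambda_k(\hat\Sigma)|>2\varepsilon$; if some gap is $\leq 2\varepsilon$ the true denominator may be arbitrarily small, $\alpha_{ij}$ ceases to bound the ratio, and I fall back on the orthonormality bound $|V_{ij}|^2\leq 1$ (the eigenvector matrix is orthogonal, so each squared entry is at most $1$), giving the first case. Since this orthonormality bound always holds, the reported bound is $\min(\alpha_{ij},1)$. Symmetrically, for the lower bound I use the per-factor lower bound in the numerator and a Weyl upper bound on each denominator factor, producing $\beta_{ij}$; here the numerator lower bound is meaningful only when every factor satisfies $|\lambda_i(\hat\Sigma)-\lambda_k(\hat M_j)|>2\varepsilon$, and otherwise the product can vanish, leaving only the trivial $|V_{ij}|^2\geq 0$ of the first case.

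Finally, because the hypothesis $\|\hat\Sigma-\Sigma\|_2\leq\varepsilon$ is a single deterministic event that at once controls every eigenvalue of $\Sigma$ and of each minor $M_j$, all of these bounds hold jointly over all pairs $(i,j)$ with no union bound, which is the claimed simultaneity. I expect the \emph{main obstacle} to be twofold: justifying the sign-free recasting of the identity through the interlacing argument, and the careful bookkeeping of the degenerate configurations in which a perturbation interval straddles zero --- these configurations are exactly what the two-way case splits in the statement encode. The compression estimate $\|\hat M_j-M_j\|_2\leq\varepsilon$ is a smaller, standard ingredient.
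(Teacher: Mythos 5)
Your proposal is correct and follows essentially the same route as the paper's proof: bound $\|\hat{M}_j - M_j\|_2 \leq \|\hat{\Sigma}-\Sigma\|_2 \leq \varepsilon$ so the same $\varepsilon$ controls the minor's eigenvalues, apply Weyl's theorem and the triangle inequality to place each eigenvalue gap within $2\varepsilon$ of its empirical counterpart, and finish with interval arithmetic, the degenerate configurations (an interval straddling zero) falling back to the trivial bounds $0$ and $1$. The one cosmetic difference is that you justify the sign-free recasting of the eigenvalue--eigenvector identity via a Cauchy-interlacing sign count, whereas the paper gets it in one line from the non-negativity of $|V_{ij}|^2$, which forces the ratio of products to be non-negative so that absolute values can be inserted factor by factor.
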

\begin{proof}
We first note that $\|\hat{M}_j - M_j\|_2 \leq \|\hat{\Sigma} - \Sigma\|_2$, where $\hat{M}_j$ denotes the corresponding minor of $\hat{\Sigma}$, which follows directly from the subadditivity of the norm.  This indicates we can also use $\varepsilon$ to bound the perturbation on the eigenvalues of $\hat{M}_j$, though it would be possible to compute a slightly tighter bound on $|V_{ij}|^2$ at the expense of extra computation for a tighter bound on $\|\hat{M}_j - M_j\|_2$.

We subsequently note that the quantity to be bounded is non-negative, which allows us to replace all eigenvalue differences with the absolute value.
The rest of the inequality follows by a simple application of interval arithmetic \citep{IntervalAnalysis2009} based on eigenvalue bounds from Weyl's theorem.
\end{proof}
\begin{corollary}
If the lower bound on $|V_{ij}|^2$ is greater than zero, we have that the sign of $V_{ij}$ is equal to the sign of $\hat{V}_{ij}$ and we can recover an upper and lower bound on $V_{ij}$ directly.  Otherwise, we conclude that the signed lower bound is the negative of the unsigned upper bound.
\label{cor:signedBounds}
\end{corollary}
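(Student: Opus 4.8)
The plan is to resolve the sign indeterminacy of the eigenvector by a connectedness argument. Although the $i$th eigenvectors of $\Sigma$ and $\hat{\Sigma}$ are each only defined up to a global sign, once we pin the sign of the population eigenvector to be the continuous continuation of the empirical one, a strictly positive lower bound on $|V_{ij}|^2$ will prevent the coordinate $V_{ij}$ from ever crossing zero, and hence from changing sign. Concretely, the first step is to interpolate between the observed and population matrices along the segment $\Sigma(t) := (1-t)\hat{\Sigma} + t\Sigma$ for $t \in [0,1]$, and to fix the sign of the $i$th eigenvector of $\Sigma(t)$ so that it varies continuously with $t$; this is legitimate as long as $\lambda_i(\Sigma(t))$ remains a simple eigenvalue, which is the standing assumption under which $V_{ij}$ is well defined and the product in Theorem~\ref{aistats2021:eigenvalue_eigenvector_identity} is non-degenerate.

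Next I would observe that the segment never leaves the perturbation ball, since $\|\Sigma(t) - \hat{\Sigma}\|_2 = t\,\|\Sigma - \hat{\Sigma}\|_2 \le t\varepsilon \le \varepsilon$ for every $t$. Proposition~\ref{aistats2021:eigenvector_bounds} therefore applies verbatim to each $\Sigma(t)$, keeping $\hat{\Sigma}$ and $\varepsilon$ fixed, so on the active lower-bound branch we obtain $|V_{ij}(t)|^2 \ge \beta_{ij} > 0$, i.e.\ $|V_{ij}(t)| \ge \sqrt{\beta_{ij}} > 0$ uniformly in $t$. The real-valued map $t \mapsto V_{ij}(t)$ is then continuous and nowhere vanishing on $[0,1]$, so by the intermediate value theorem it has constant sign; since $V_{ij}(0) = \hat{V}_{ij}$ and $V_{ij}(1) = V_{ij}$, this yields $\sign(V_{ij}) = \sign(\hat{V}_{ij})$. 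Combining the now-certified sign with the magnitude envelope $\sqrt{\beta_{ij}} \le |V_{ij}| \le \sqrt{\min(\alpha_{ij},1)}$ from the Proposition produces the two-sided interval on $V_{ij}$ directly: $[\sqrt{\beta_{ij}},\,\sqrt{\min(\alpha_{ij},1)}]$ when $\hat{V}_{ij} > 0$, and its reflection through the origin when $\hat{V}_{ij} < 0$.

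For the complementary case, when the certified lower bound on $|V_{ij}|^2$ is $0$, the non-vanishing hypothesis above fails, so $V_{ij}(t)$ may pass through zero along the segment and the sign can no longer be pinned. The only surviving information is the unsigned bound $|V_{ij}| \le \sqrt{\min(\alpha_{ij},1)}$, which translates into the symmetric signed interval $[-\sqrt{\min(\alpha_{ij},1)},\,\sqrt{\min(\alpha_{ij},1)}]$; equivalently, the signed lower bound is the negative of the unsigned upper bound, exactly as stated.

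I expect the main obstacle to be the well-definedness of the continuous, sign-consistent branch $t \mapsto V_{ij}(t)$, which hinges on $\lambda_i(\Sigma(t))$ staying simple along the whole segment. This is precisely what the active lower-bound branch supplies: positivity of $\beta_{ij}$ forces $|\lambda_i(\hat{\Sigma}) - \lambda_k(\hat{M}_j)| > 2\varepsilon$ for all $k$, and since $\|\hat{M}_j - M_j\|_2 \le \varepsilon$ (as established in the proof of Proposition~\ref{aistats2021:eigenvector_bounds}) the same separation $|\lambda_i(\Sigma(t)) - \lambda_k(M_j(t))| > 0$ persists for every $t$. By the Cauchy interlacing theorem a repeated $\lambda_i$ would force some minor eigenvalue to coincide with it, contradicting this separation; hence $\lambda_i$ remains simple, the associated eigenprojector depends continuously (indeed analytically) on $t$, and a continuous sign-consistent eigenvector exists. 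The remaining steps are then routine.
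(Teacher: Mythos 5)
Your proof is correct, and it is more than the paper itself offers: the paper states Corollary~\ref{cor:signedBounds} without any proof, implicitly treating it as an obvious consequence of Proposition~\ref{aistats2021:eigenvector_bounds} (a nonvanishing magnitude bound pins the sign of the perturbed entry). Your homotopy argument along $\Sigma(t)=(1-t)\hat{\Sigma}+t\Sigma$ is a legitimate and essentially the canonical way to make that implicit claim rigorous, and you correctly identify the two points the paper glosses over: first, that the perturbation hypothesis $\|\Sigma(t)-\hat{\Sigma}\|_2\le t\varepsilon\le\varepsilon$ holds uniformly on the segment, so the lower-bound branch of the Proposition applies at every $t$ with the same empirical data $(\hat{\Sigma},\hat{M}_j,\varepsilon)$; and second, that a continuous sign-consistent eigenvector branch exists, which you secure via Cauchy interlacing --- the separation $|\lambda_i(\hat{\Sigma})-\lambda_k(\hat{M}_j)|>2\varepsilon$ forced by $\beta_{ij}>0$, combined with Weyl's theorem applied to both $\Sigma(t)$ and its minor, rules out a repeated $\lambda_i(\Sigma(t))$ anywhere on the path, since a multiplicity would force a minor eigenvalue to coincide with $\lambda_i$. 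One cosmetic caveat: you say the Proposition applies ``verbatim'' to $\Sigma(t)$, but as printed its $\beta_{ij}$ has population eigenvalues $|\lambda_i(\Sigma)-\lambda_k(\Sigma)|$ in the denominator (almost surely a typo for $\hat{\Sigma}$), which would make the bound $t$-dependent; this is harmless for your argument, since the intermediate value theorem only needs $V_{ij}(t)\neq 0$ pointwise, and positivity of the bound at each $t$ is governed by the fixed empirical numerator terms, with the denominator always positive. Your handling of the degenerate case --- symmetric interval $[-\sqrt{\min(\alpha_{ij},1)},\sqrt{\min(\alpha_{ij},1)}]$ when the certified lower bound is zero --- matches the corollary's second clause exactly.
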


\subsection{Orthonormality constraints}\label{aistats2021:sec:orthonormality}

The bounds on the eigenvectors described above are valid, but do not exploit the orthonormality property of the eigenvectors. Here we show how one can make use of it, and tighten the bounds. 

Let $\hat{\beta}$ (respectively $\hat{\alpha}$) denote a previously obtained lower bound (respectively upper bound) on $V \odot V$, where $\odot$ denotes the Hadamard product.  From the fact that the norm of each eigenvector is equal to one, we additionally obtain $0 \leq |V_{ij}|\leq 1$, and
\begin{equation} \label{eq:TightenedBoundsFromNormConstraints}
1 - \sum_{k\neq i}\hat{\alpha}_{kj}  \leq |V_{ij}|^2\leq 1 - \sum_{k\neq i}\hat{\beta}_{kj} .
\end{equation}

Orthogonality of the eigenvectors,
\begin{align}
    \sum_k V_{ki} V_{kj}=0 ,
\end{align}
implies additional constraints on $V_{ij}$.  Let $\alpha \geq V \geq \beta$ be previously obtained (probabilistic) signed bounds on $V$ (here the inequalities are taken to be element wise).
Using interval arithmetic \citep{IntervalAnalysis2009} we can compute bounds
\begin{align}
    \mu_{ijl} \leq \sum_{k\neq l} V_{ki} V_{kj} \leq \nu_{ijl} ,
\end{align}
and subsequently infer
\begin{align}\label{eq:TightenedBoundsFromOrthogonalityConstraints}
    \min(V_{li}\alpha_{lj}, V_{li}\beta_{lj}) + \mu_{ijl} \leq 0 \leq \max(V_{li}\alpha_{lj}, V_{li}\beta_{lj}) + \nu_{ijl} ,
\end{align}
which in turn leads to additional upper and lower bounds on $V_{li}$, the forms of which depend on the signs of the coefficients.  We enumerate the cases here:
\begin{enumerate}
    \item $\alpha_{lj} < 0$:
    We start with the constraint
    \begin{align}   \min(V_{li}\alpha_{lj}, V_{li}\beta_{lj})  \leq -\mu_{ijl} .
    \end{align}
    Assuming  $V_{li}\geq 0$ yields
    \begin{align}   V_{li}\geq -\frac{\mu_{ijl}}{\beta_{lj} }
    \end{align}
    Assuming $V_{li}<0$ yields
    \begin{align} V_{li}  \geq -\frac{\mu_{ijl}}{\alpha_{lj}}
    \end{align}
    If we have already constrained the sign of $V_{li}$ we can now directly use one of these two inequalities, but we know in any case that
    \begin{align}
   V_{li}  \geq \min\left(-\frac{\mu_{ijl}}{\alpha_{lj} }, -\frac{\mu_{ijl}}{\beta_{lj}} \right) .
    \end{align}
    
    A similar line of reasoning yields 
    \begin{align}
     V_{li}  \leq \max\left(-\frac{\nu_{ijl}}{\alpha_{lj}}  , -\frac{\nu_{ijl}}{\beta_{lj}} \right) ,
    \end{align}
    which can be similarly sharpened if the sign of $V_{li}$ is known.
    \item $\beta_{lj}>0$:
    Analogous to the previous case, we obtain
    \begin{align}
        \min\left( -\frac{\nu_{ijl}}{\alpha_{lj}} ,  -\frac{\nu_{ijl}}{ \beta_{lj}  }  \right)\leq V_{li} \leq \max\left( -\frac{\mu_{ijl}}{\alpha_{lj}} , -\frac{\mu_{ijl}}{\beta_{lj}} \right) .
    \end{align}
    \item $\alpha_{lj} > 0 \wedge \beta_{lj}< 0$:
    Considering the first inequality
    \begin{align}
        \min(V_{li}\alpha_{lj}, V_{li}\beta_{lj})  \leq - \mu_{ijl}.
    \end{align}
    Assume $V_{li}\geq 0$:
    \begin{align}
        V_{li}  \geq - \frac{\mu_{ijl}}{ \beta_{lj} }
    \end{align}
    Assume $V_{li}<0$:
    \begin{align}
        V_{li}  \leq - \frac{\mu_{ijl}}{\alpha_{lj} }
    \end{align}
    Considering the second inequality
    \begin{align}
    \max(V_{li}\alpha_{lj}, V_{li}\beta_{lj})  \geq -\nu_{ijl} .
    \end{align}
    Assuming $V_{li}\geq 0$ yields
    \begin{align}
    V_{li} \geq -\frac{\nu_{ijl}}{ \alpha_{lj} } .
    \end{align}
    Assuming $V_{li}<0$ yields
    \begin{align}
     V_{li}  \leq -\frac{\nu_{ijl}}{ \beta_{lj} } ,
    \end{align}
    and we can only add an additional constraint if the sign of $V_{li}$ is known.
    \item $\beta_{lj}=0$ (the case of $\alpha_{lj}=0$ is symmetric):
    The first constraint simplifies to
    \begin{align}
    \min(V_{li}\alpha_{lj}, 0) \leq - \mu_{ijl} .
    \end{align}
    If $\mu_{ijl}$ is negative, this is satisfied trivially, so under the assumption that $\mu_{ijl}>0$, this reduces to 
    \begin{align}
    V_{li} \leq - \frac{\mu_{ijl}}{ \alpha_{lj} } .
    \end{align}

    The second constraint simplifies to
    \begin{align}
    \max(V_{li}\alpha_{lj}, 0) \geq - \nu_{ijl} ,
    \end{align}
    and implies (under the assumption that $ \nu_{ijl}<0 $)
    \begin{align}
    V_{li} \geq - \frac{\nu_{ijl}}{\alpha_{lj}} .
    \end{align}
\end{enumerate}
We note that Equations~\eqref{eq:TightenedBoundsFromNormConstraints} and~\eqref{eq:TightenedBoundsFromOrthogonalityConstraints} may compliment each other, so we may iterate their application until there is no more improvement in the confidence intervals, or until improvement falls below a given tolerance.
The orthonormality constraints are summarized in Appendix~\ref{section:orthonormality}.

\subsection{Bounding the precision matrix via the bounds on the eigendecomposition}
From the results described in the previous sections, we can derive bounds on the precision matrix $\Theta := \Sigma^{-1}$. To bound 
\begin{align}
    \Theta_{ij}  
    = \sum_{k=1}^p V_{ik} \lambda_{k}(\Sigma)^{-1} V_{jk}  , \label{eq:PrecisionPeturbation}
\end{align}
we have upper and lower bounds for each of the $\lambda_{k}(\Sigma)$ (from Weyl's theorem) and $V$ (from the eigenvalue-eigenvector identity), so we may simply employ interval arithmetic \citep{IntervalAnalysis2009} on the operations in Equation~\eqref{eq:PrecisionPeturbation} to compute upper and lower bounds on $\Theta_{ij}$.  If the upper and lower bounds have the same sign, we know that the partial correlation is significantly non-zero.

We note, however, that for the purpose of computing bounds on the precision matrix, a more direct approach is to use the result bounding the $L_2$ norm of the covariance matrix together with perturbation bounds for a matrix inverse. This method yields tighter bounds and will be described in the following section.

\section{Bounding the precision matrix using general perturbation bounds}
\label{section:pm_pertubration_bounds}

To derive a method to bound the entries of the precision matrix, we make use of general perturbation bounds from \cite{Kereta21,Xu2020,Li2018}. 
We will use the fact that for matrices $A$ and $B$:
\begin{align}
    \|A\|_2=\sigma_{max}(A) = \sqrt{\lambda_{max}(A^H A)},
\end{align}
where $\sigma$ is the singular value of $A$ and  $A^H$ is the conjugate transpose of $A$. Furthermore, for symmetric PSD matrices (such as covariances) it holds that:
\begin{align}
    \sigma_{max}(A) = \lambda_{max}(A)
\end{align}

\paragraph{General perturbation bound}
We make use of the inequality at the top of the page 4 of \cite{Kereta21}, which allows us to get the bound on the difference between the empirical and the true precision matrices. 
In \cite{Kereta21} they use Moore-Penrose pseudo-inverse notation, but since we deal with covariances, we can simply consider inverse matrices. Let $\Delta$ denote a random perturbation and $\hat \Sigma = \Sigma + \Delta$. Considering one of the inequalities in \cite{Kereta21}, we have that:
\begin{align}
    \|\hat \Sigma^{-1} - \Sigma^{-1} \|_F \leq \|\hat \Sigma^{-1} \|_2\|\Sigma^{-1}\Delta\|_F.
    \label{eq:general_perturbation_bound}
\end{align}

Using the strong sub-multiplicativity property of the Frobenius norm \citep[Chap. 2.5]{drineas2017lectures}), namely that $\|AB\|_F \leq \|A\|_2\|B\|_F$, we can rewrite the r.h.s. of Equation~\ref{eq:general_perturbation_bound} as follows:
\begin{align}
    \|\hat \Sigma^{-1} \|_2\|\Sigma^{-1}\Delta\|_F \leq \|\hat \Sigma^{-1} \|_2\|\Sigma^{-1}\|_2\|\Delta\|_F,
\end{align}


It is clear that $\|\hat \Sigma^{-1} \|_2$ is computable, 
and 
\begin{align}
    \|\hat \Sigma^{-1} \|_2 = \lambda_{max}(\hat \Sigma^{-1}) = \lambda_{min}(\hat \Sigma)^{-1}.
\end{align}

We will now make use of  Corollary~\ref{aistats2021:corollary:eigenvalues_bounds_pm} to derive the bound for $\|\Sigma^{-1}\|_2$. We know that 
\begin{align}
    \|\Sigma^{-1}\|_2 = \lambda_{max}( G^{-1})
    = \lambda_{min}(\Sigma)^{-1}
\end{align}

Let's assume that $\lambda_{min}(\hat{\Sigma}) > \sqrt{2 \lambda_{\max}} \Phi^{-1} \left( 1 - \delta/2 \right)$. Then from Corollary~\ref{aistats2021:corollary:eigenvalues_bounds_pm}
\begin{align}
    \|\Sigma^{-1}\|_2 \leq \frac{1}{\lambda_{min}(\hat \Sigma) - \sqrt{2 \lambda_{\max}} \Phi^{-1} \left( 1 - \delta/2 \right)}.
\end{align}

Making use of Lemma~\ref{aistats2021:lemma:norm2_less_normfro} to get the bound for $\|\Delta\|_F = \| \hat{\Sigma} - \Sigma \|_F$ allows us to finally compute the bound:
\begin{align}
    \|\hat \Sigma^{-1} \|_2\|\Sigma^{-1}\Delta\|_F \leq \frac{\sqrt{2 \lambda_{\max}} \Phi^{-1} \left( 1 - \delta/2 \right)}{\lambda_{min}(\hat \Sigma)(\lambda_{min}(\hat \Sigma) - \sqrt{2 \lambda_{\max}} \Phi^{-1} \left( 1 - \delta/2 \right))}
\end{align}


Therefore, using the notation $\Theta = \Sigma^{-1}$,

\begin{align}
    \|\hat \Theta - \Theta \|_2 \leq \frac{\sqrt{2 \lambda_{\max}} \Phi^{-1} \left( 1 - \delta/2 \right)}{\lambda_{min}(\hat \Sigma)(\lambda_{min}(\hat \Sigma) - \sqrt{2 \lambda_{\max}} \Phi^{-1} \left( 1 - \delta/2 \right))}
    \label{eq:theta_l2_norm}
\end{align}

\subsection{Rate of convergence}

We have that
\begin{align}
    \varepsilon = \sqrt{2 \lambda_{\max}} \Phi^{-1} \left( 1 - \delta/2 \right)
\end{align}
decreases as $\mathcal{O}(n^{-1/2})$, so asymptotically we have that 
\begin{align}
    \|\hat \Theta - \Theta \|_2 \leq \frac{c n^{-1/2}}{C - n^{-1/2}}
\end{align}
for some constants $c>0$ and $C>0$.  The r.h.s.\ is still $\mathcal{O}(n^{-1/2})$, which can be seen by letting $m$ be large enough such that $m^{-1/2}<C$, and noting that the constant $\frac{1}{C-m^{-1/2}} > \frac{1}{C-n^{-1/2}}$ for all $n>m$.

\subsection{Statistical test}

Algorithm~\ref{aistats2021:algo:test} summarizes the steps needed to test conditional dependence. The null-hypothesis of the test is that $\Theta_{ij} = 0$. We note that its acceptance does not imply conditional independence for all distributions $P_X$, and rejection of the null hypothesis implies dependence between the $i^{th}$ and $j^{th}$ variables conditioned on all other variables.

\begin{algorithm}[ht]
\caption{Proposed statistical test.
}\label{aistats2021:algo:test}
\begin{algorithmic}[1]
\Require{$\delta$, the significance level of the test; 
$\operatorname{X} = (X_1, ..., X_n)$ the set of random variables of dimension $p$ with sample size $n$.}

    
    \State Compute $\hat{\Sigma}$, the unbiased estimator of $\Sigma$ from data.
    
    \State Compute $\hat{\Theta} = \hat{\Sigma}^{-1}$, the estimator of the precision matrix.
    
    \State Compute $\lambda_{max}$ and $\lambda_{min}$, the largest and smallest eigenvalue of $\Cov(\hat{\Sigma})$, respectively.
    
    \State Compute the error bound $\varepsilon = \sqrt{2 \lambda_{max}} \Phi^{-1} \left( 1 - \delta/2 \right)$, where $\Phi$ is the CDF of a standard normal distribution. 
    
    
    
    
    
    \State Compute a $threshold$ as the r.h.s. of Equation~\eqref{eq:theta_l2_norm}.
    
    \State Reject the null hypothesis if $\hat{\Theta}_{ij} > threshold$.
\end{algorithmic}
\label{EJS:algo:hypothesis_testing_threshold}
\end{algorithm}

\section{Experiments}
We show empirical results for producing confidence intervals of the eigendecomposition and the precision matrix on both synthetic and real-world data from the medical and physics domain, described below. In addtion, we also use synthetic data to demonstrate the performance of the developed statistical test.

\subsection{Datasets}
\paragraph{Synthetic data}
We generated synthetic datasets of dimension $p$  from Normal and Laplace distribution for some precision matrix $\Theta$ following \cite{kotz2001asymmetric}. We generate random precision matrices as follows: Firstly, we generate a random unnormalized affinity matrix $A$ of size $p\times p$. Subsequently, we perform its eigendecomposition, and ensure positive eigenvalues. After assembling this decomposition with the adjusted eigenvalues back, we obtain a positive definite matrix, which we consider to be $\Theta$. In our experiments, we generated precision matrices with at least one zero.

\paragraph{Knee osteoarthritis}

We used the data from baselines of two publicly available follow-up patient cohorts -- Osteoarthritis Initiative (OAI, $n_{subjects}=4796$), and Multicenter Osteoarthritis Study (MOST, $n_{subjects}=3026$). We applied our method for conducting an analysis for assessing the dependence between the symptoms and radiographic progression of knee osteoarthritis in the medial side of the joint. 

We leveraged the gradings done according to the Osteoarthritis Research Society International (OARSI) grading atlas~\citep{altman2007atlas}. The following variables from the radiographic part of the OAI and MOST datasets were used: 
\begin{enumerate}[nosep]
    \item Osteophytes (bone spures) severity in the tibia bone (OSTM).
    \item Ostophytes severity in the femur (OSFM).
\end{enumerate}

We also included the symptomatic assessments done according to the Western Ontario and McMaster
Universities Osteoarthritis Index (WOMAC)~\citep{bellamy1995womac}. WOMAC allows for quantification of the patient's pain, and we included the total WOMAC score, calculated as a sum of all the scores of its subsections. 

\paragraph{Higgs boson}

We consider the Higgs boson classification dataset from \cite{baldi2014searching}, containing 11 million observations. 
For this experiment we used the seven high-level features derived from the kinematic properties measured for the decay products after a particle collision occurs. Four of them, i.e., $ m_{jj}$, $m_{jjj}$, $m_{lv}$ and $m_{jlv}$, involve the observable decay products leptons $l$ and jets, and the remaining three ($m_{bb}$, $m_{wbb}$ and $m_{wwbb}$) are related to the generation of the Higgs bosons.

\subsection{Bounds on the eigendecomposition}

We utilize the theoretical results presented above to
bound the eigendecomposition of the covariance matrix for the two described experiments. The data is whitened as a pre-processing step.
\Cref{fig:eigvals_oa,fig:eigvals_higgs_third} show that the obtained upper and lower bound on the empirical eigendecomposition are valid.

\begin{figure}
    \centering
    \subfloat[Lower bound]{
    \includegraphics[width=0.3\linewidth]{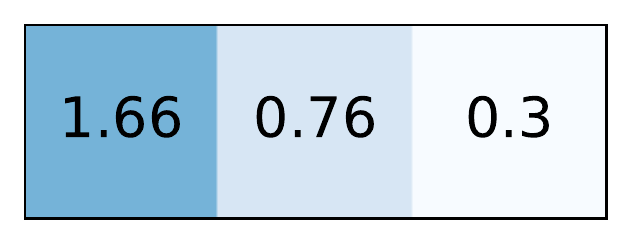}
    } \hfill
    \subfloat[Empirical]{
    \includegraphics[width=0.3\linewidth]{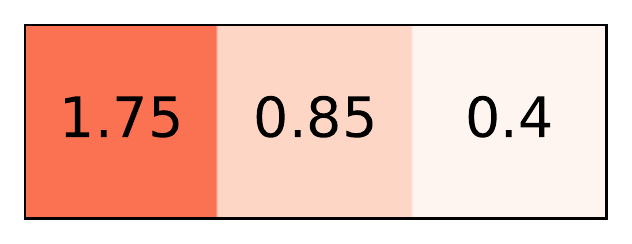}
    } \hfill
    \subfloat[Upper bound]{
    \includegraphics[width=0.3\linewidth]{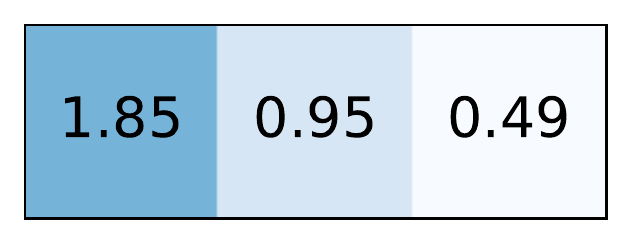}
    } \hfill
    \par
    \subfloat[Lower bound]{
    \includegraphics[width=0.3\linewidth]{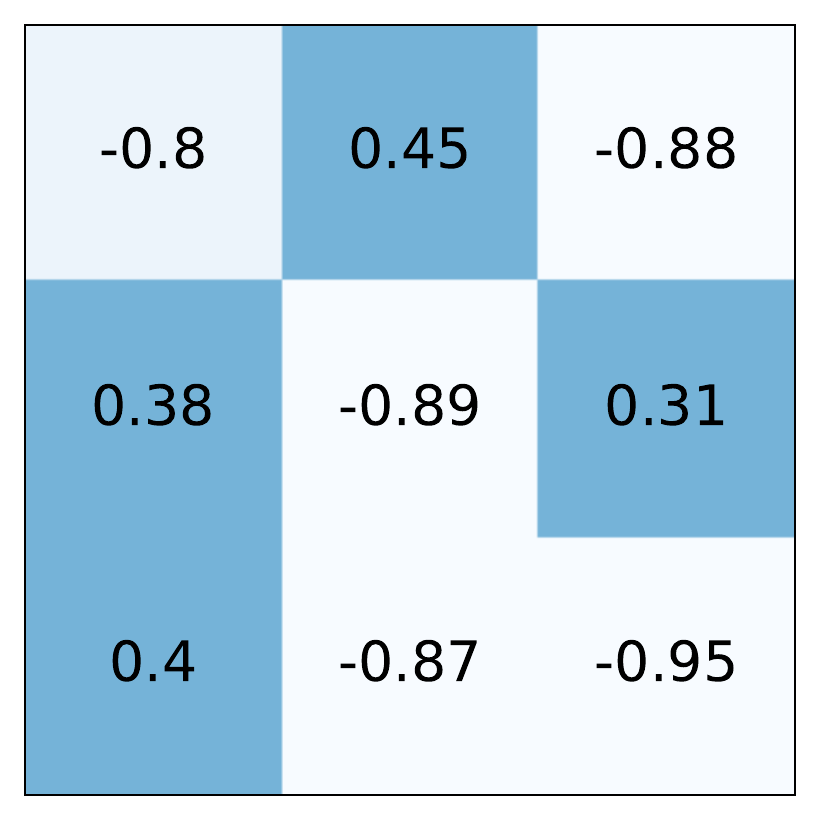}
    } \hfill
    \subfloat[Empirical]{
    \includegraphics[width=0.3\linewidth]{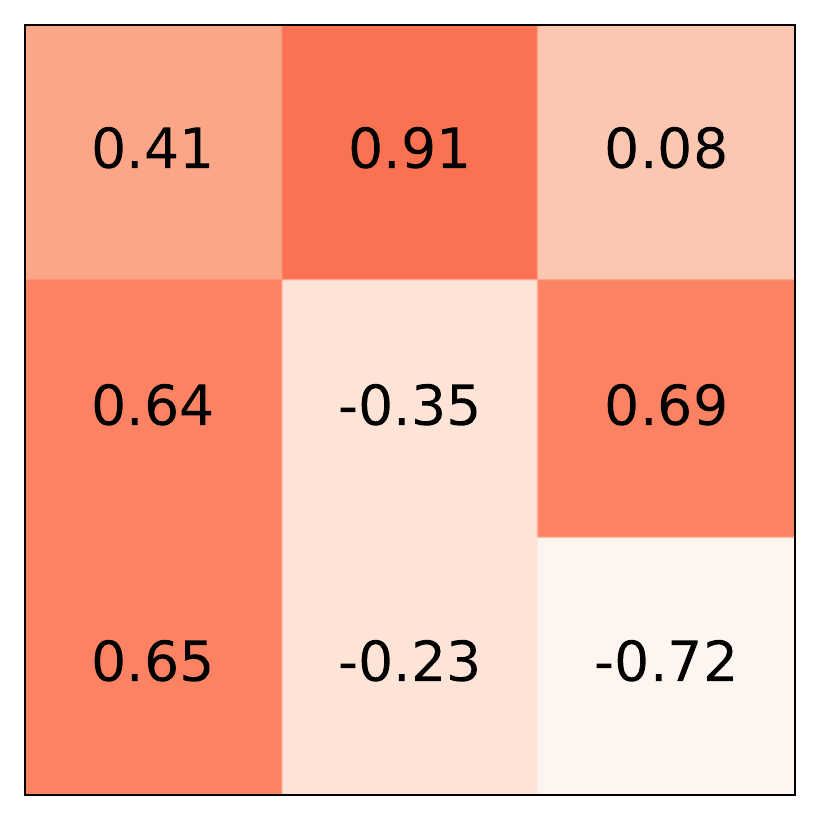}
    } \hfill
    \subfloat[Upper bound]{
    \includegraphics[width=0.3\linewidth]{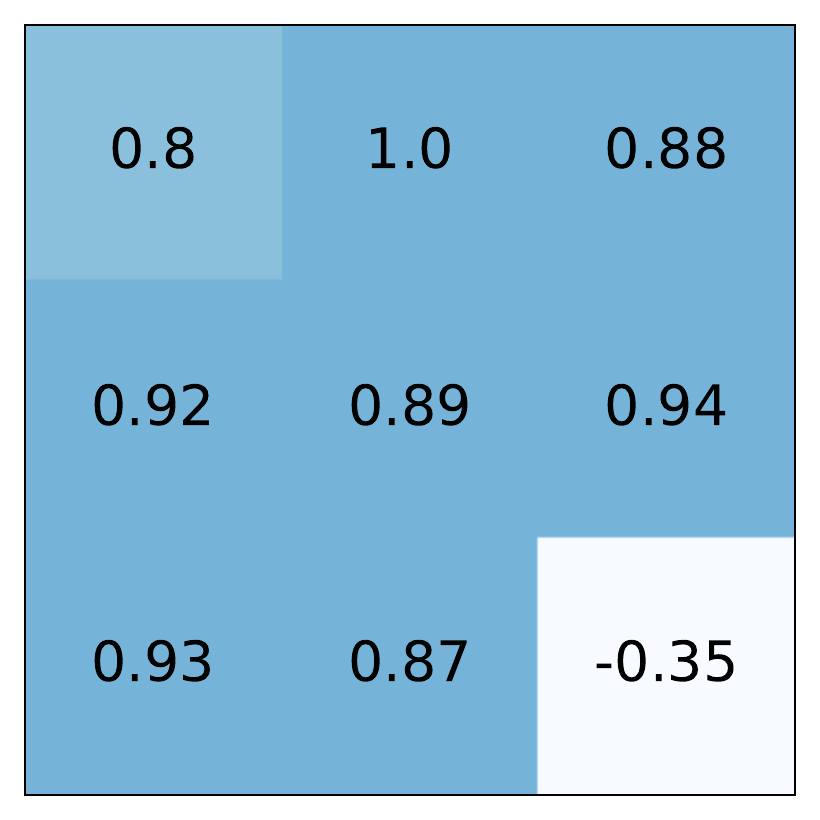}
    } \hfill
    \caption{Bounds on the eigenvalues (first row) and eigenvectors (second row) for the Osteoarthritis data [WOMAC, OSTM, OSFM]}
    \label{fig:eigvals_oa}
\end{figure}

\paragraph{Application: Interpretation of the medical data} In the case of the knee OA data (\autoref{fig:eigvals_oa}), we observe the following relationships known in the medical literature using the eigenvectors and the bounds: 
\begin{enumerate}
    \item Osteoarthritis develops in both tibia and femur for most of the cases. This can be seen from the first eigenvector. Our method yields non-trivial bounds for the imaging features, and returns high uncertainty for the symptoms.
    \item Symptoms (WOMAC) have very limited association with structural features of the disease (osteophytes denoted by OSTM and OSFM features). Here, we consider the second eigenvector: a non-trivial bound is obtained for the symptoms, and high uncertainty is seen for the imaging features.
    \item There exist some small number of cases in the data, for which tibial and femoral medial OA do not develop simultaneously. We can see that the empirical value for the symptoms in the case of the third eigenvector is nearly zero, and it has high uncertainty. The imaging features, in contrast, have rather tight bounds of the same size.
\end{enumerate}

\paragraph{Scalability test}
The experiment with the Higgs boson dataset (\autoref{fig:eigvals_higgs_third})  shows that we can get non-trivial bounds on data with several million examples and multiple features. Furthermore, our unoptimized implementation of the method runs several orders of magnitude faster than a bootstrap procedure. We run all experiments on a CPU of a regular laptop.

\begin{figure}
    \centering
    \subfloat[Lower bound]{
    \includegraphics[width=0.3\linewidth]{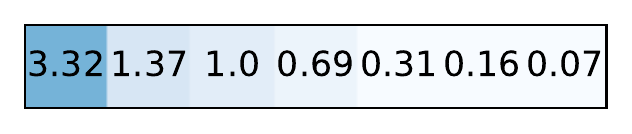}
    } \hfill
    \subfloat[Empirical]{
    \includegraphics[width=0.3\linewidth]{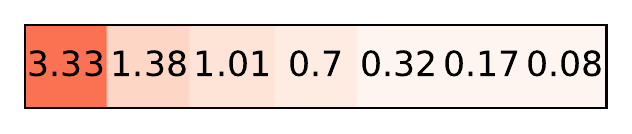}
    } \hfill
    \subfloat[Upper bound]{
    \includegraphics[width=0.3\linewidth]{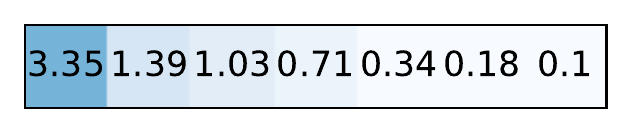}
    } \hfill
    \par
    \subfloat[Lower bound]{
    \includegraphics[width=0.3\linewidth]{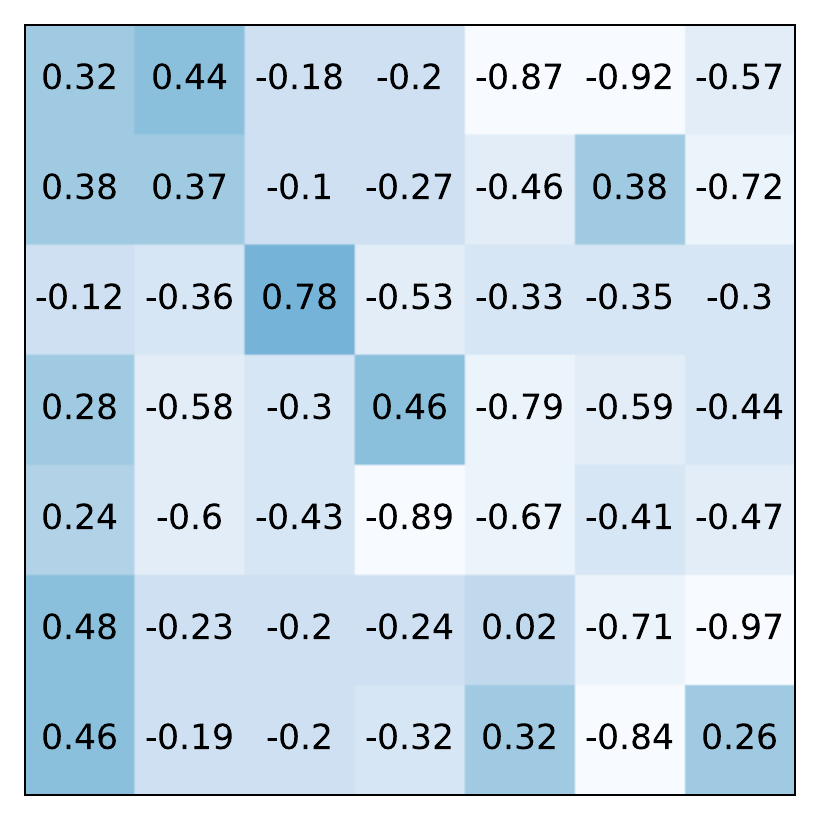}
    } \hfill
    \subfloat[Empirical]{
    \includegraphics[width=0.3\linewidth]{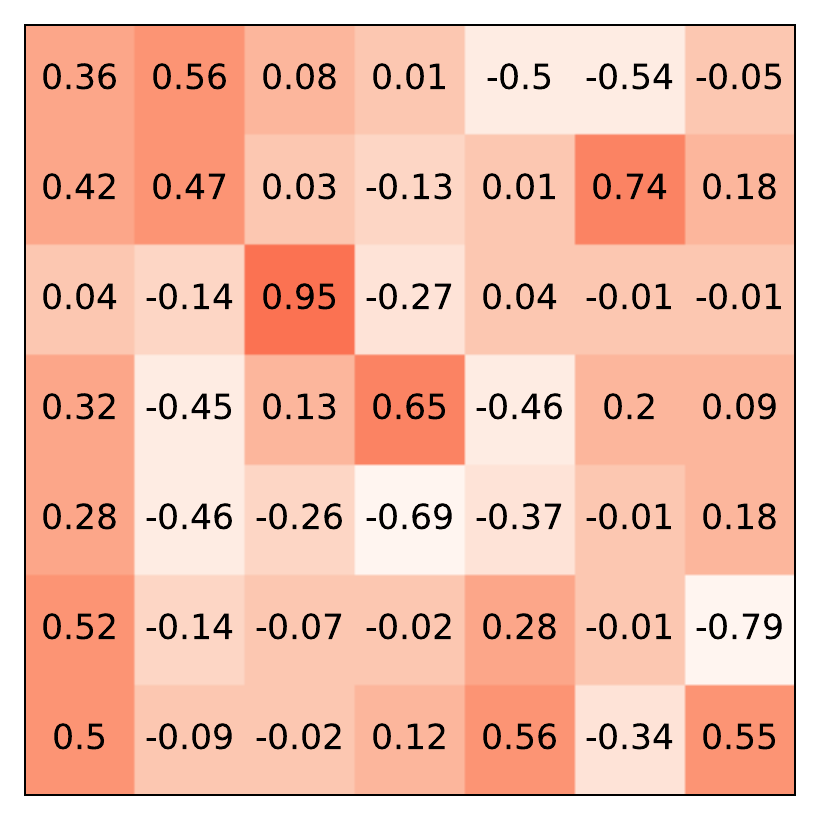}
    } \hfill
    \subfloat[Upper bound]{
    \includegraphics[width=0.3\linewidth]{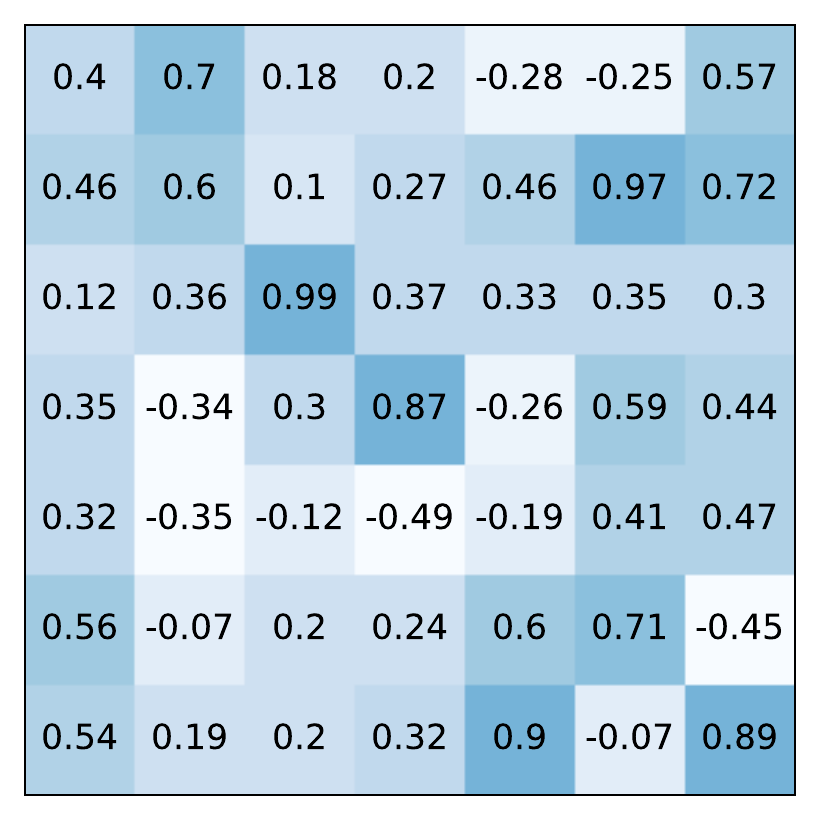}
    } \hfill
    \caption{Bounds on the eigenvalues (first row) and eigenvectors (second row) for the Higgs dataset [$ m_{jj}$, $m_{jjj}$, $m_{lv}$, $m_{jlv}$, $m_{bb}$, $m_{wbb}$ and $m_{wwbb}$].}
    \label{fig:eigvals_higgs_third}
\end{figure}

\subsection{Bounds on the precision matrix}
We make use of the theoretical result in Section~\ref{section:pm_pertubration_bounds} to bound the entries of the precision matrix of the two datasets.
The lower and upper bounds on the precision matrix for the knee osteoarthritis data is shown in Figure~\ref{fig:pmatrix_oa}.
Figure~\ref{fig:pmatrix_higgs_third} illustrates the confidence intervals on the precision matrix obtained from the Higgs dataset.


\begin{figure}[ht!]
    \centering
    \subfloat[Lower bound]{
    \label{fig:pmatrix_oa:a}\includegraphics[width=0.3\linewidth]{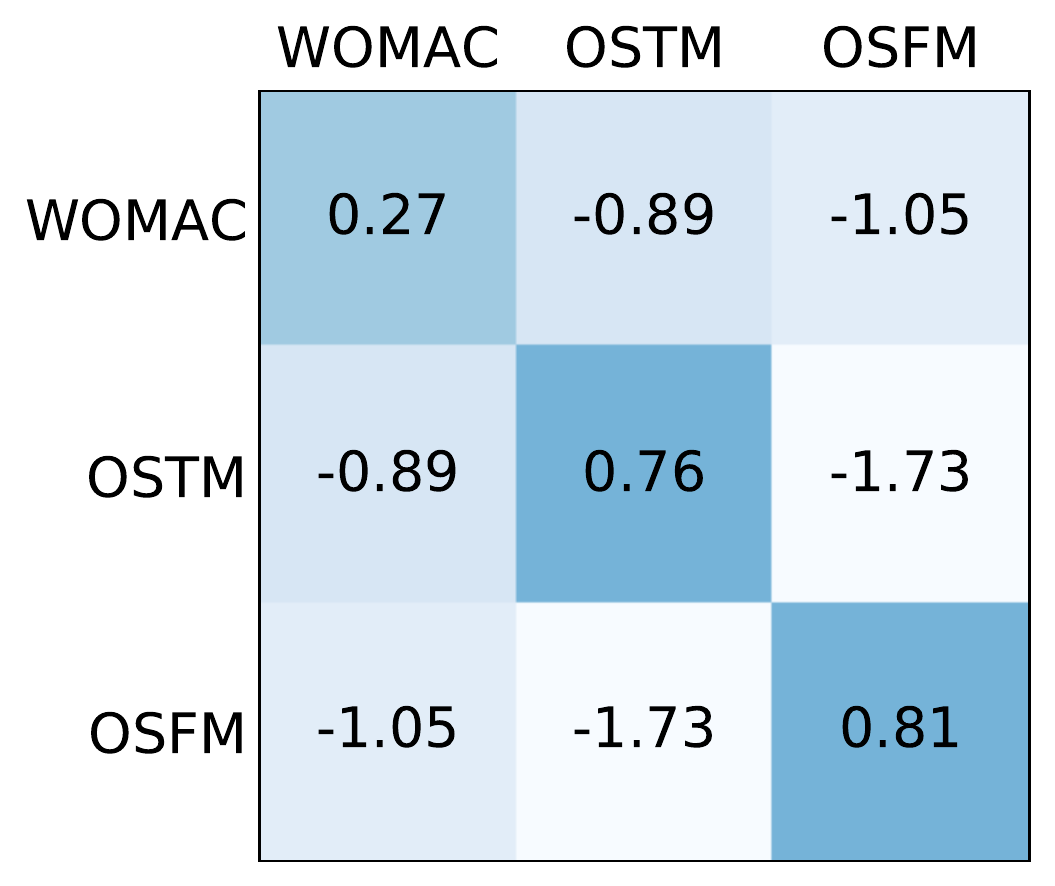}
    } \hfill
    \subfloat[Empirical]{
    \label{fig:pmatrix_oa:b}\includegraphics[width=0.3\linewidth]{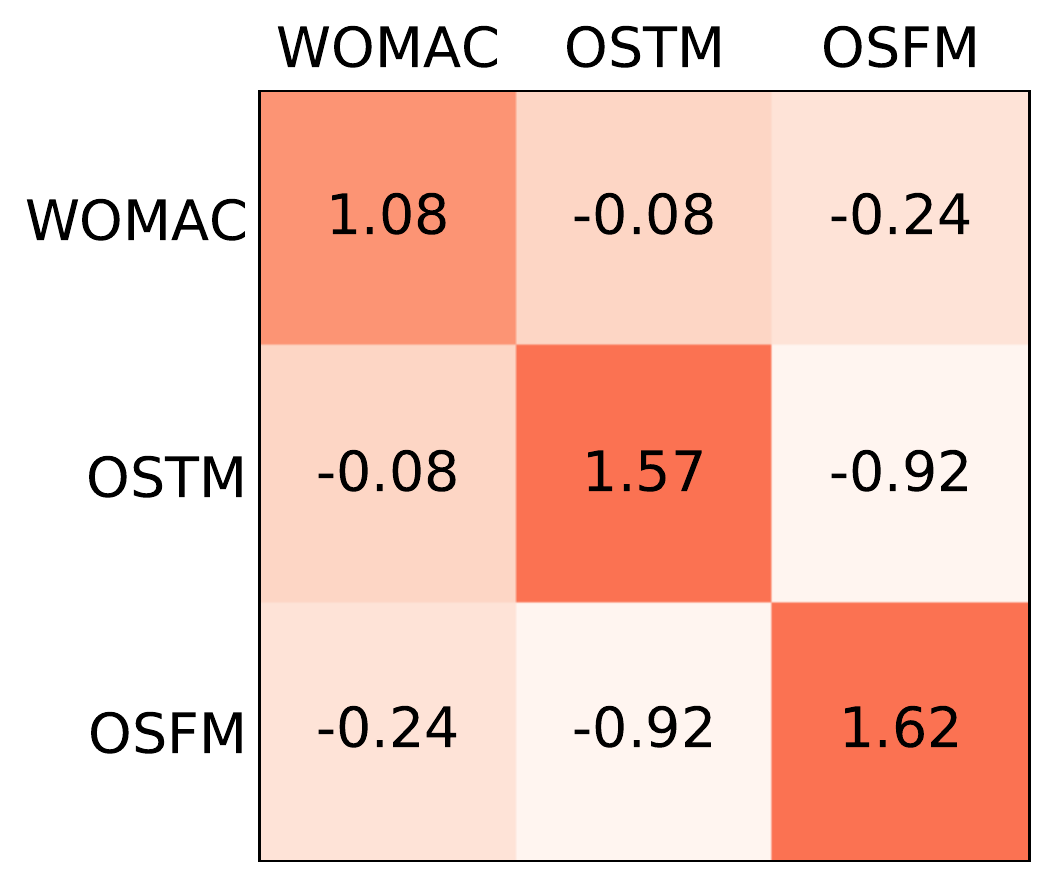}
    } \hfill
    \subfloat[Upper bound]{
    \label{fig:pmatrix_oa:c}\includegraphics[width=0.3\linewidth]{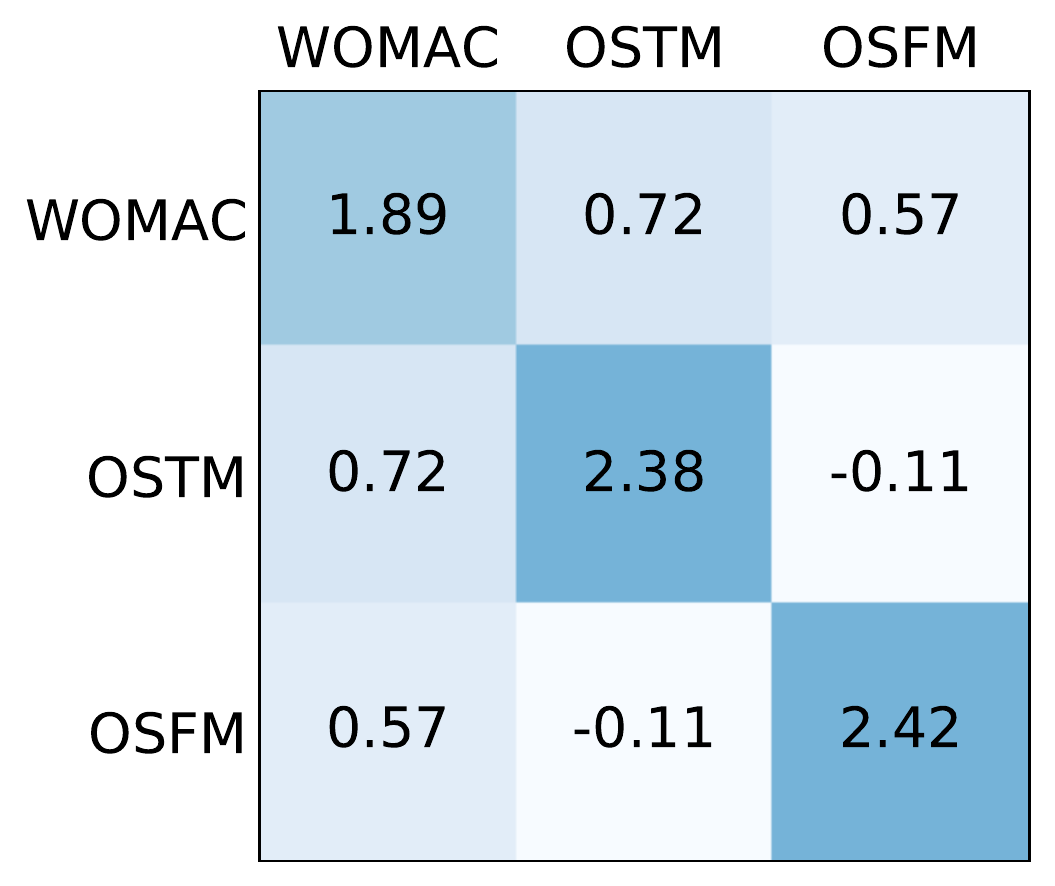}
    } \hfill
    \caption{Bounds on the precision matrix for the Osteoarthritis data [WOMAC, OSTM, OSFM]
    }
    \label{fig:pmatrix_oa}
\end{figure}

\begin{figure}[ht!]
    \centering
    \subfloat[Lower bound]{
    \label{fig:pmatrix_higgs_third:a}\includegraphics[width=0.3\linewidth]{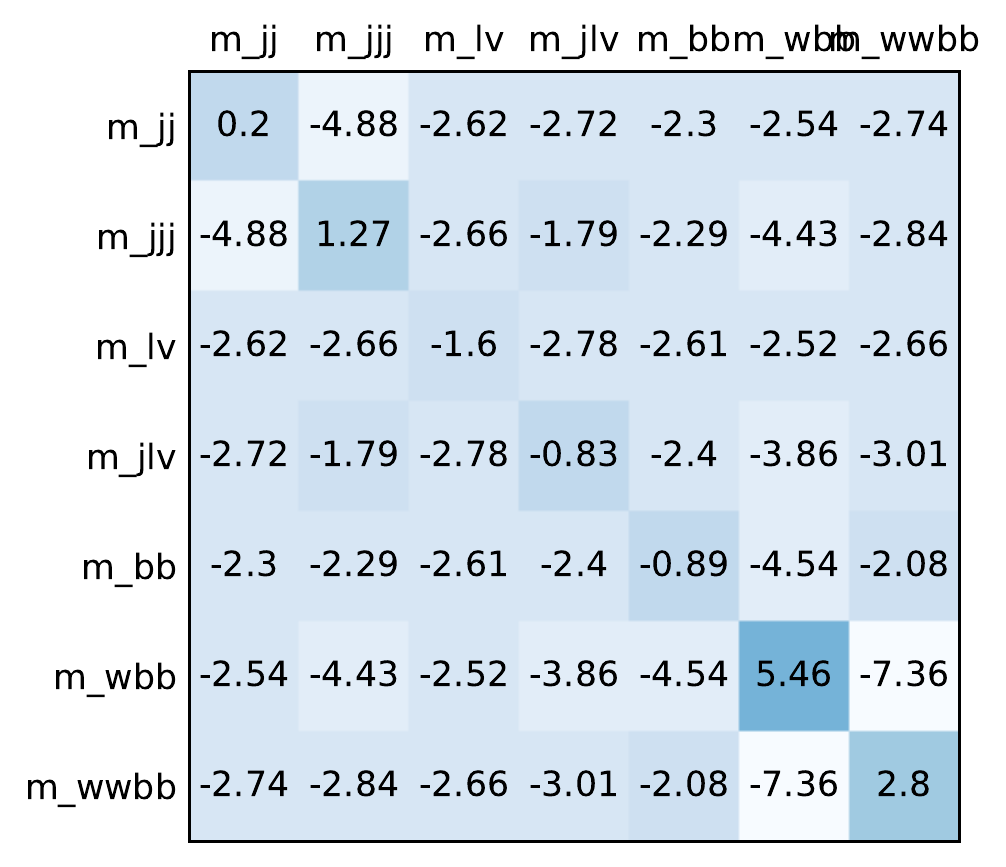}
    } \hfill
    \subfloat[Empirical]{
    \label{fig:pmatrix_higgs_third:b}\includegraphics[width=0.3\linewidth]{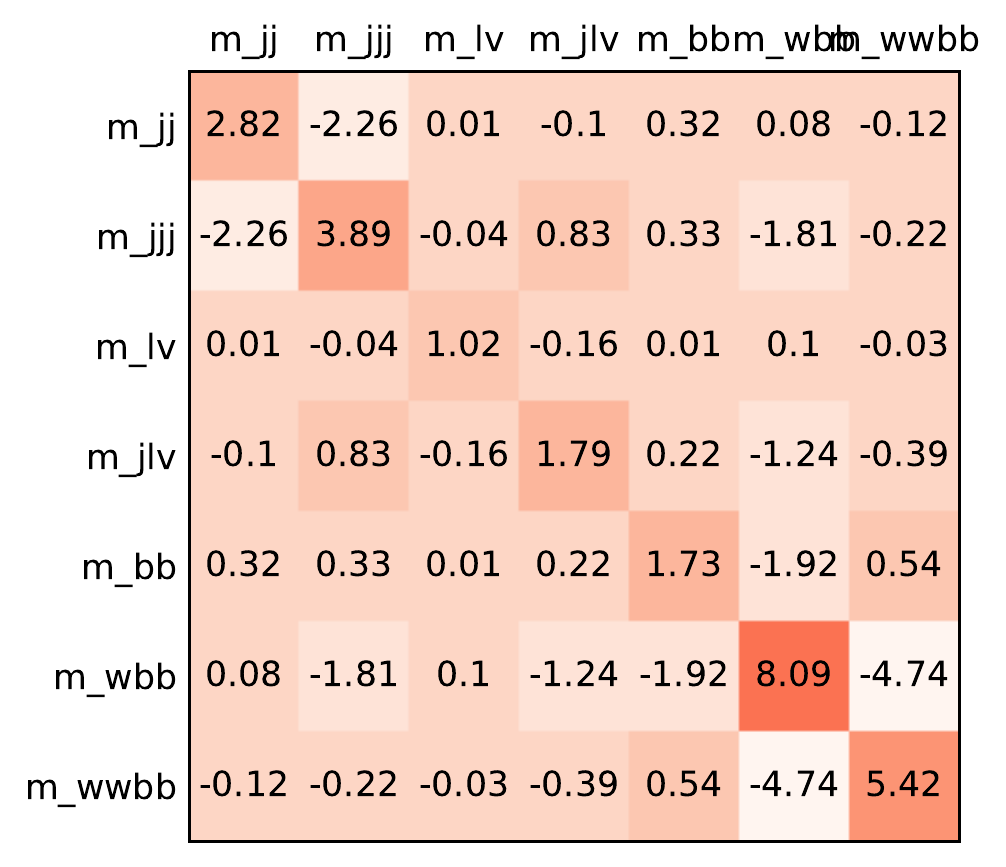}
    } \hfill
    \subfloat[Upper bound]{
    \label{fig:pmatrix_higgs_third:c}\includegraphics[width=0.3\linewidth]{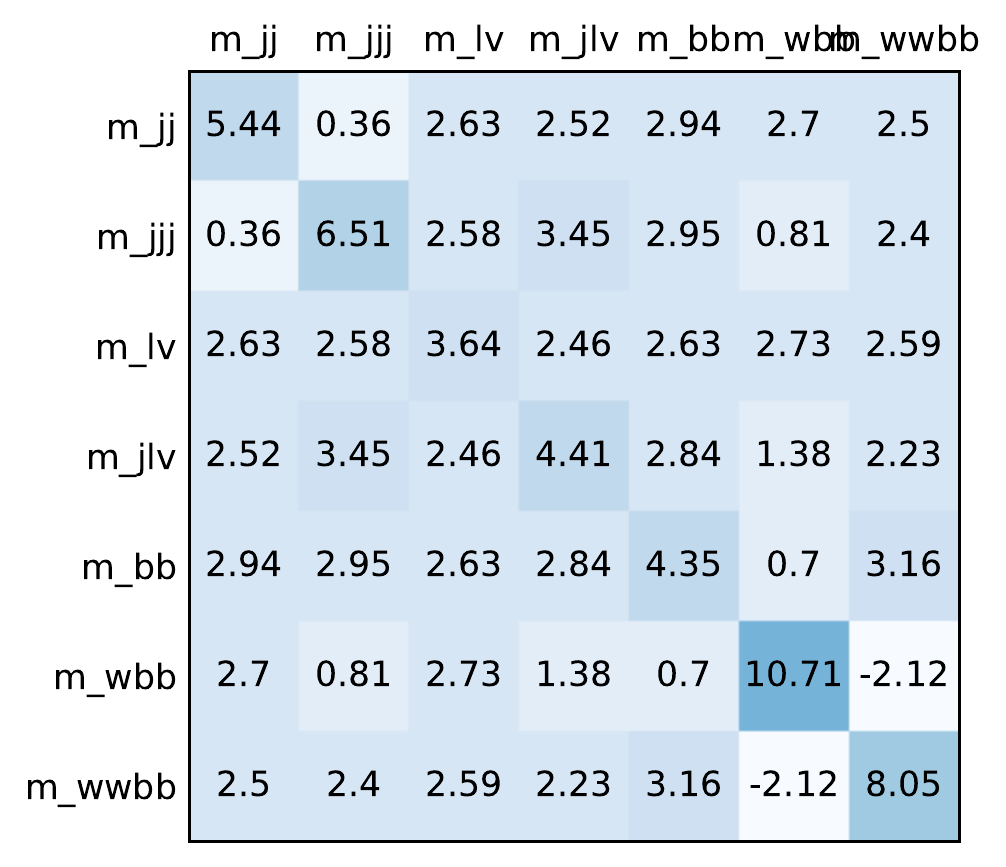}
    } \hfill
    \caption{Bounds on the precision matrix for Higgs data [$ m_{jj}$, $m_{jjj}$, $m_{lv}$, $m_{jlv}$, $m_{bb}$, $m_{wbb}$ and $m_{wwbb}$] 
    }
    \label{fig:pmatrix_higgs_third}
\end{figure}

\subsection{Statistical testing}
Figure \ref{fig:fisher} visually illustrates the performance of the developed statistical test and its comparison to the Fisher's transform. Our results show that the Fisher's transform-based test is well calibrated for the Gaussian data, but yields false positives for Laplace data. In contrast, our test is always conservative regardless of the data distribution, and therefore yields sound conclusions on statistical significance.

\begin{figure}[ht]
    \centering
    \subfloat[Gaussian distribution]{
     \includegraphics[width=0.45\linewidth]{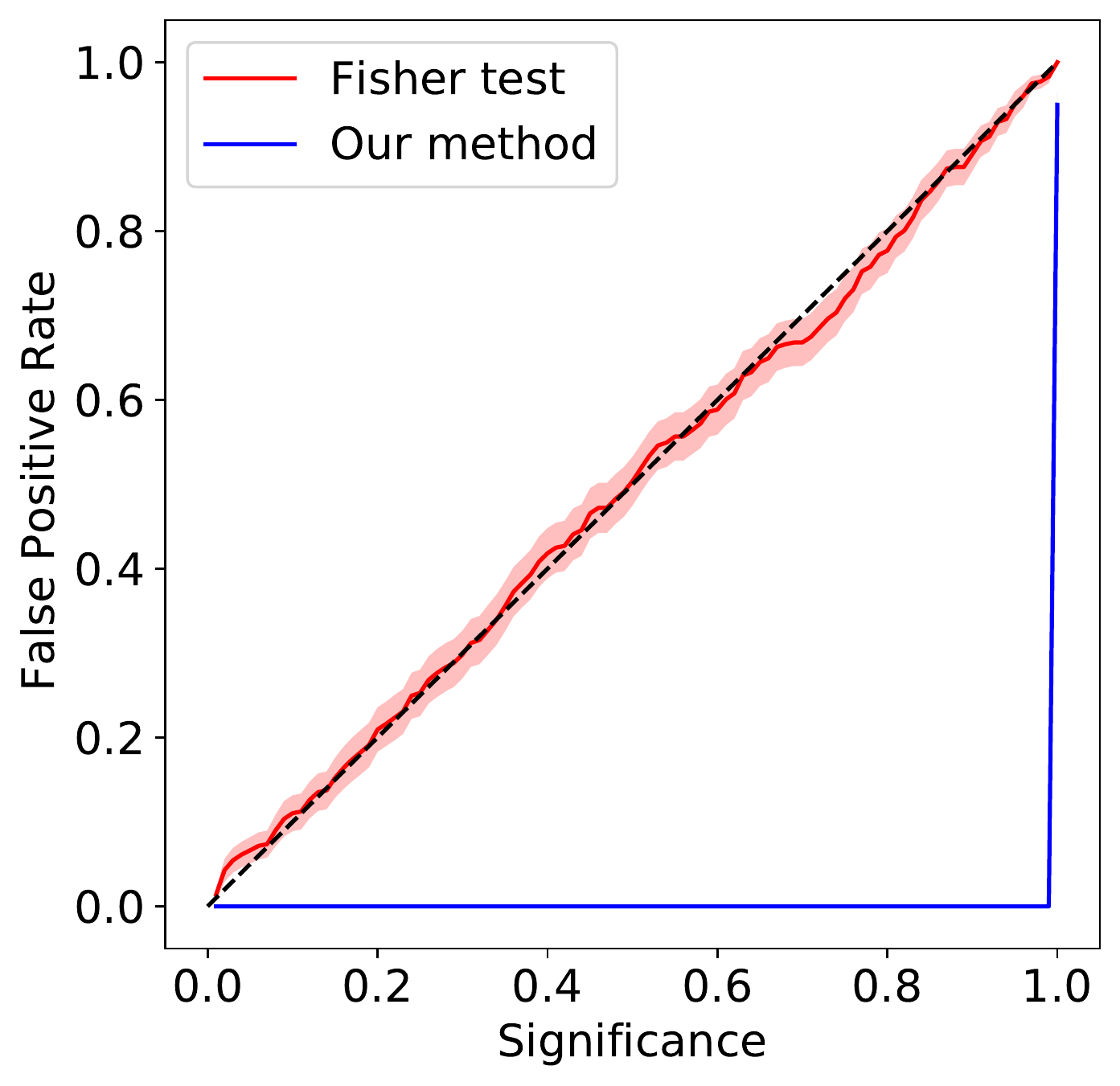} \hfill
    }
    \subfloat[Laplace distribition]{
    \includegraphics[width=0.45\linewidth]{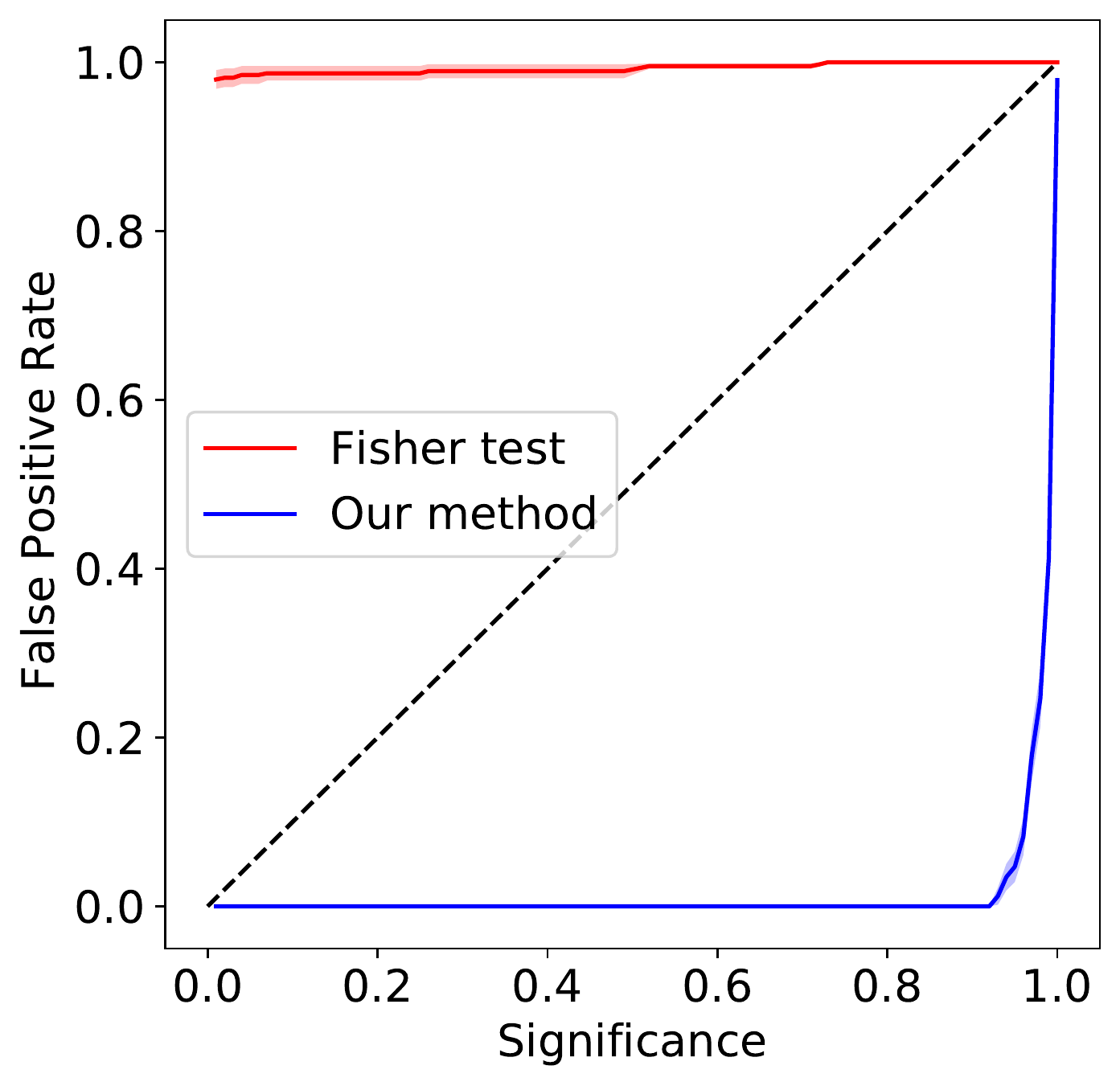}
    }
    \caption{Graphical comparison between the Fisher's test and our method on Gaussian and Laplace data. $n=500000$, $p=5$, simulation repeated $100$ times.}
    \label{fig:fisher}
\end{figure}

Figure~\ref{fig:graphs} visually illustrates the (inferred) graphical structure of synthetic data sampled from a Laplace distribution. Figure~\ref{fig:graphs:ground_truth} represents the ground truth graphical model and \Cref{fig:graphs:fisher,fig:graphs:our} represent the inferred structure using the Fisher's test and our test, respectively. The tests are performed with a 95\% confidence level. 

\begin{figure}[ht!]
    \centering
    \subfloat[Ground truth]{
    \label{fig:graphs:ground_truth}\includegraphics[width=0.3\linewidth]{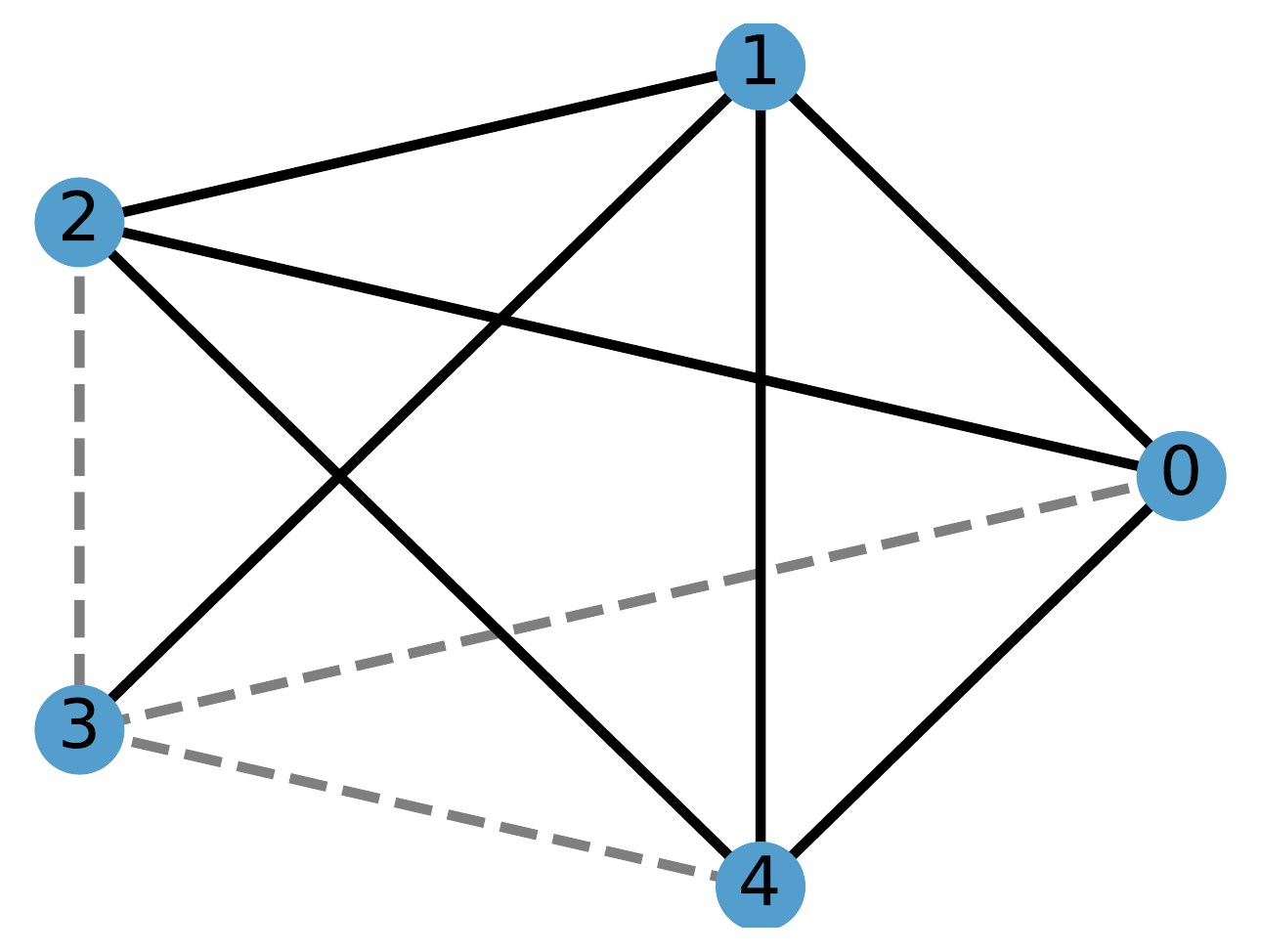}}
    \subfloat[Fisher's test]{
    \label{fig:graphs:fisher}\includegraphics[width=0.3\linewidth]{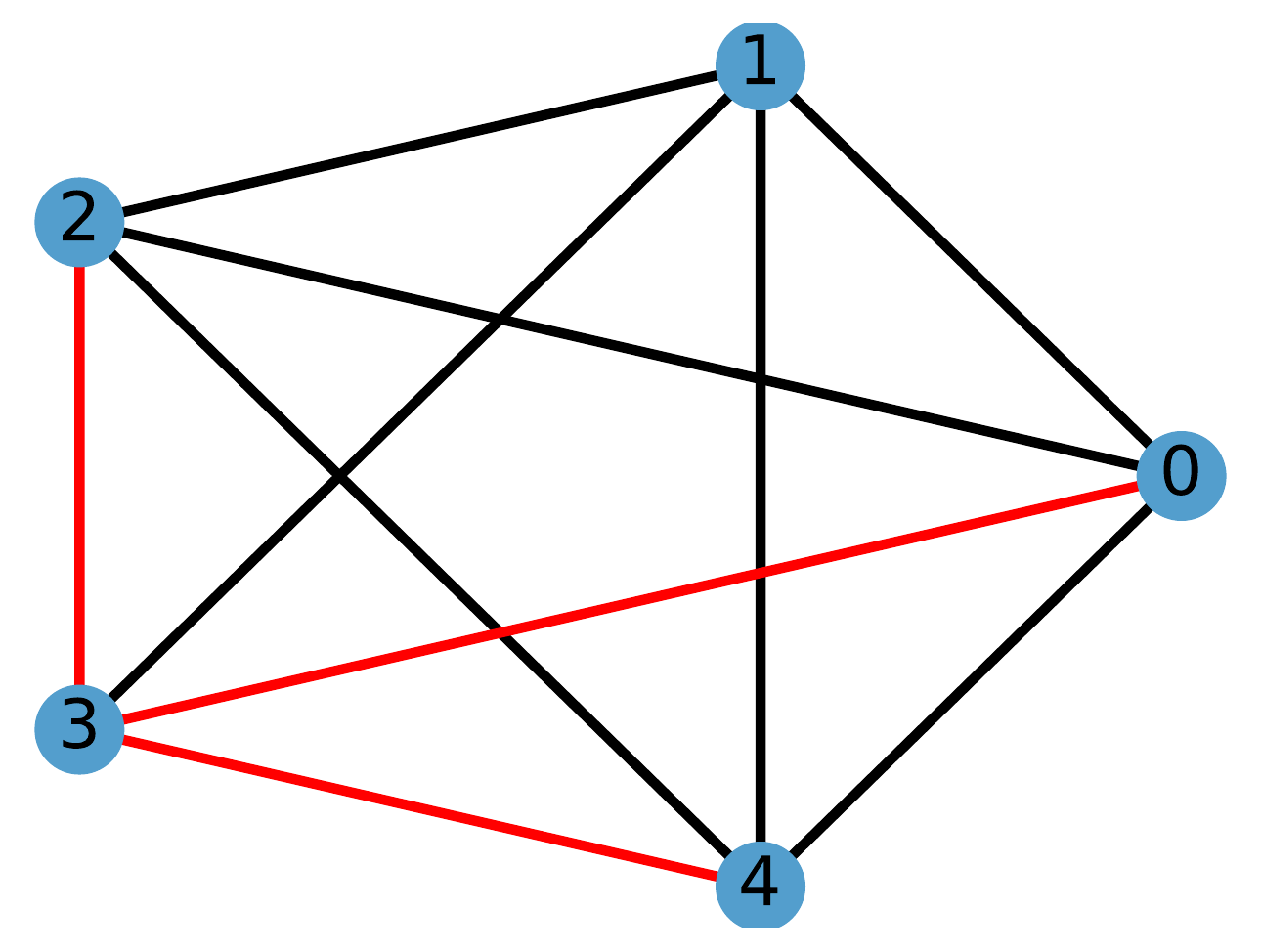}}
        \subfloat[Our test]{
    \label{fig:graphs:our}\includegraphics[width=0.3\linewidth]{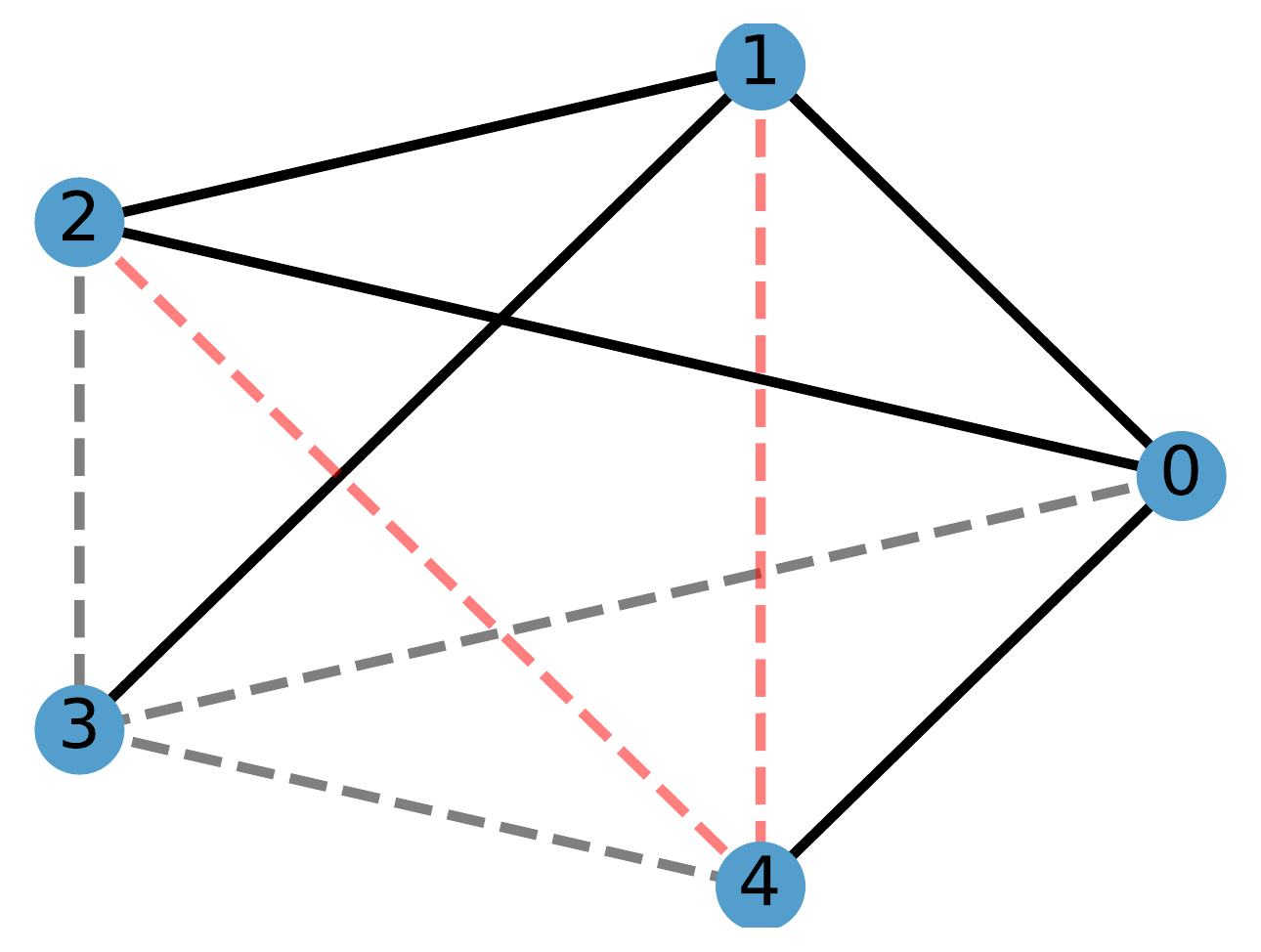}}
    \caption{Left: Ground truth graphical model of Laplace-dsitributed synthetic data with $n$=50000000 and $p$=5.  Middle: Inferred dependence structure using Fisher's test. Right: Inferred dependence structure using our test. Solid edges represent significant non-zero correlations (rejecting the null hypothesis). A dashed edge corresponds to not rejecting the null hypothesis. Wrongly inferred edges are marked in red.}
    \label{fig:graphs}
\end{figure}

\section{Discussion and conclusions}

In this paper, we have presented a novel methodology for obtaining confidence intervals on the eigendecomposition of covariance matrices, and a procedure for bounding the entries of precision matrices. Both methods first utilize the theory of U-statistics to bound the perturbation of the empirical covariance matrix. We then use Weyl’s theorem to obtain bounds on the eigenvalues and finally, we leverage the recently formalized eigenvalue-eigenvector identity to  bound the eigenvectors. To the best of our knowledge, this is the first work that explores the use of the eigenvalue-eigenvector identity in the context of estimating error bounds on the eigendecomposition of covariance matrices and confidence intervals on precision matrices. The confidence intervals on the precision matrix are derived using general pertubration bounds.

The main strength of the proposed methodology is that it does not impose any restrictions on the data distribution and on the existence of finite higher-order moments. In fact, the only assumptions are that i) the covariance matrix exists and ii) an unbiased estimator converges to it. We showed an application of the method for obtaining error bars on the precision matrix and proposed a procedure for detecting significantly non-zero partial correlations. The conducted experiments with both synthetic and real data verify the validity of the confidence intervals on the eigendecomposition and the precision matrix obtained with our proposed method, given a sufficient number of samples. 

Despite the benefits of the method, this paper has still some limitations. As such, the main downside of our approach is that, in practice, the number of samples has to be exponentially larger than the dimensionality of the data in order to obtain non-trivial bounds. Secondly, we did not fully explore the possibilities for tightening the bounds on the eigenvectors, however, we proposed orthonormality constraints-based approach in Section~\ref{aistats2021:sec:orthonormality}.
However, we note that the method is consistent for all covariances with non-repeated eigenvalues, though possibly with poor convergence in the event that eigenvalues are closely spaced.  This requirement is a direct consequence of the use of the eigenvalue-eigenvector identity, which is trivial in the case of repeated eigenvalues (Equation~\eqref{eq:EigenvalueEigenvectorIdentity} degenerates to $0=0$).

To conclude, we have proposed, to the best of our knowledge, the first application of the recently introduced eigenvalue-eigenvector identity to the estimation of confidence intervals over eigenvectors, resulting in a consistent estimator with computation linear in the number of samples.  Furthermore, we derived a method to bound the precision matrix and we demonstrated its practical application to real-world datasets, including particle physics and medicine. 
Source code is available at~\url{https://github.com/tpopordanoska/confidence-intervals}. 

\appendix

\section{Proof of Theorem~\ref{aistats2021:theorem:covariance_covariance_ustatistic}: Derivation of the covariance of the U-statistics for the covariance matrix} \label{aistats2021:sec:appendix_proof_covcov}

\subsection{Overview}
In this appendix, we show the details of the derivation of Theorem~\ref{aistats2021:theorem:covariance_covariance_ustatistic}. We derive minimum-variance unbiased estimates of the covariance between two $U$-statistics estimates $\hat{\Sigma}_{ij}$ and $\hat{\Sigma}_{kl}$, where $(i,j,k,l)$ range over each of the $d$ variates in a covariance matrix $\hat{\Sigma}$. We note $h$ and $g$ the corresponding kernel of order 2 for $\hat{\Sigma}_{ij}$ and  $\hat{\Sigma}_{kl}$, where
\begin{align}
    h(u_1,u_2) &= \dfrac{1}{2} \left( X_{i_1} - X_{i_2} \right)  \left( X_{j_1} - X_{j_2} \right), \mbox{with } u_r=(X_{i_r},X_{j_r})^T \\
    g(v_1,v_2) &= \dfrac{1}{2} \left( X_{k_1} - X_{k_2} \right)  \left( X_{l_1} - X_{l_2} \right), \mbox{with } v_r=(X_{k_r},X_{l_r})^T.
\end{align}
Then, the covariance $\Cov(\hat{\Sigma}_{ij},\hat{\Sigma}_{kl}) $ for the two $U$-statistics $\hat{\Sigma}_{ij}$ and $\hat{\Sigma}_{kl}$ is 
\begin{align} 
\label{aistats2021:appendix:eq:CovCov}
    \Cov(\hat{\Sigma}_{ij},\hat{\Sigma}_{kl}) 
    &= \binom{n}{2}^{-1} \left( 2 (n-2) \zeta_1 + \zeta_2 \right) \\
    &= \binom{n}{2}^{-1} \left( 2 (n-2) \zeta_1  \right) + \mathcal{O}(n^{-2}) \nonumber
\end{align}
where $\zeta_1 = \Cov \left(  \E_{u_2}[h(u_1,u_2)], \E_{v_2}[g(v_1,v_2)] \right) $.

Depending on the equality and inequality of these four index variables, the empirical covariance estimate takes a different kernel form.  We have employed a computer assisted proof to determine that there are seven different forms and that each of the unique $\binom{p^2-\binom{p}{2}}{2}$ entries in $\operatorname{Cov}(\hat{\Sigma})$ (cf.\ Eq.~\eqref{aistats2021:eq:Covariance_of_Ustat_estimator} from the main text) can be mapped to one of these seven cases by a simple variable substitution.

In the sequel, we first describe the algorithm that determines the seven cases (Sec.~\ref{aistats2021:appendix:subsec:algorithm_forsevencases}), we derive empirical estimators for each of these seven cases (Sec.~\ref{aistats2021:appendix:subsec:derivation_sevencases}) and show that in all cases we have linear computation time in the number of samples (Sec.~\ref{aistats2021:appendix:subsec:proof_linear_computation_time}).

\subsection{Description of the algorithm providing the seven cases} 
\label{aistats2021:appendix:subsec:algorithm_forsevencases}

We formally describe the algorithm that provided us 7 cases for the derivation of $\Cov(\hat{\Sigma}_{ij},\hat{\Sigma}_{kl})$ of Theorem~\ref{aistats2021:theorem:covariance_covariance_ustatistic}, where $\left(i,j,k,l \right)$ vary over the set of $d$ variables. 

\begin{description}
    \item[\textbf{Enumeration}] 
    First, we enumerate all configurations of $\Cov(\hat{\Sigma}_{ij},\hat{\Sigma}_{kl})$, which can be encoded as a non-unique assignment matrix of variables $i,j,k,l$ to instantiated variables $\left(a,b,c,d \right)$. For a fixed assignment of $i$ to variable $a$, we can list all possible assignments of the 3 remaining variables  $\left(j,k,l\right)$ to any $\left(a,b,c,d\right)$.  Na\"{i}vely, we have $4^3$ possible assignments, but many of them will be equivalent by variable substitution.  To test whether two forms are equivalent, it is sufficient to test a reduced form for equality. 
    
    \item[\textbf{Reduced Form}] 
    We map a variable assignment to a reduced form by re-labeling variables sorted by the number of occurrences, which reduces the number of possible matches up-to non-uniqueness of the mapping due to equal numbers of variable occurrences.  This ambiguity is then resolved by testing for symmetries. 
    
    \item[\textbf{Symmetry}] 
    Symmetry of the covariance operator brings the following equally that we take into consideration in testing for equivalence:
    \begin{align}
    \Cov(\hat{\Sigma}_{ij},\hat{\Sigma}_{kl}) 
    &= \Cov(\hat{\Sigma}_{kl},\hat{\Sigma}_{ij}) = 
    \Cov(\hat{\Sigma}_{ij},\hat{\Sigma}_{lk}) =  
    \Cov(\hat{\Sigma}_{lk},\hat{\Sigma}_{ij})  \\  \nonumber  
    &= \Cov(\hat{\Sigma}_{lk},\hat{\Sigma}_{ji}) = 
    \Cov(\hat{\Sigma}_{ji},\hat{\Sigma}_{kl})  
    =\Cov(\hat{\Sigma}_{ji},\hat{\Sigma}_{lk})
    \end{align}
\end{description}
The algorithm outputs each variable assignment that is not equivalent by variable substitution to any previously enumerated assignment. 
The seven different cases are enumerated in Table~\ref{aistats2021:table:enumeration7cases}.

\begin{table}[htb] 
    \centering
        \begin{tabular}{|r|l|l|}
        \hline
        Cases & Indices & Correspondence \\
        \hline
        1 & $i \neq j,k,l$; $j \neq k,l$; $k \neq l$  & $\Cov(\hat{\Sigma}_{ij},\hat{\Sigma}_{kl})$  \\
        2 & $i=j$; $j \neq k,l$; $k = l$ &  $\Cov(\hat{\Sigma}_{ii},\hat{\Sigma}_{kk})$  \\
        3 & $i=j$; $j \neq k,l$; $k \neq l$ & $\Cov(\hat{\Sigma}_{ii},\hat{\Sigma}_{kl})$  \\
        4 & $i=k$; $j \neq i,k,l$; $k \neq l$ & $\Cov(\hat{\Sigma}_{ij},\hat{\Sigma}_{il})$  \\
        5 & $i=k$; $i \neq j$; $j=l$; & $\Var(\hat{\Sigma}_{ij})$  \\
        6 & $i=j=k$; $i \neq l$ & $\Cov(\hat{\Sigma}_{ii},\hat{\Sigma}_{il})$  \\
        7 & $i=j,k,l$ & $ \Var(\hat{\Sigma}_{ii})$ \\
        \hline
        \end{tabular}
    \caption{Enumeration and correspondence of the seven cases.} \label{aistats2021:table:enumeration7cases}
\end{table}

\subsection{The seven exhaustive cases} \label{aistats2021:appendix:subsec:derivation_sevencases}

We now derive linear-time finite-sample estimates of the covariance for each of the seven cases.

\paragraph{Notation}
We denote the $p$-dimensional data matrix with $n$ i.i.d samples as $X\in \mathbb{R}^{p\times n}$, data distribution as $P_X$, $X_i$ -- $i^{th}$ row of the data matrix, $\Xbar{X_iX_j} = \mathbb{E}_{X}[X_iX_j]$, $\Xbar{X_i}\myspace\Xbar{X_j} = \mathbb{E}_{X}[X_i]\mathbb{E}_{X}[X_j]$.


\subsubsection*{Case 1: $i \neq j,k,l$; $j \neq k,l$; $k \neq l$}

The kernels are 

\noindent\begin{minipage}{.5\linewidth}
\begin{align*}
&h(u_1,u_2) = \dfrac{1}{2} \left( X_{i_1} - X_{i_2} \right) \left( X_{j_1} - X_{j_2} \right); \\
&\E_{u_2}[h(u_1,u_2)] = \dfrac{1}{2} \left( X_{i_1} - \Xbar{X_i} \right) \left( X_{j_1} -\Xbar{X_j} \right);
\end{align*}
\end{minipage}\hfill
\begin{minipage}{.5\linewidth}
\begin{align*}
&g(v_1,v_2) = \dfrac{1}{2} \left( X_{k_1} - X_{k_2} \right)  \left( X_{l_1} - X_{l_2} \right) \\
&\E_{u_2}[g(v_1,v_2)] = \dfrac{1}{2} \left( X_{k_1} - \Xbar{X_k} \right)  \left( X_{l_1} - \Xbar{X_l} \right)
\end{align*}
\end{minipage}

\begin{align}
\zeta_1 &= \Cov \left[ 
\dfrac{1}{2} \left( X_{i_1} - \Xbar{X_i} \right) \left( X_{j_1} -\Xbar{X_j} \right), 
\dfrac{1}{2} \left( X_{k_1} - \Xbar{X_k} \right)  \left( X_{l_1} - \Xbar{X_l} \right)
\right] \\
&= \dfrac{1}{4} \biggl\{ \Cov \left[ 
X_{i_1} X_{j_1} - \Xbar{X_i} X_{j_1} - X_{i_1} \Xbar{X_j} ;
X_{k_1} X_{l_1} - \Xbar{X_k} X_{l_1} - X_{k_1}  \Xbar{X_l}
\right]
\biggr\} \nonumber \\
&= \dfrac{1}{4} \biggl\{ \E_{u_1} \big[ 
X_{i_1} X_{j_1} X_{k_1} X_{l_1} - \Xbar{X_i} X_{j_1} X_{k_1} X_{l_1} -  X_{i_1} \Xbar{X_j}  X_{k_1} X_{l_1} \nonumber \\
&\qquad\qquad\qquad - X_{i_1} X_{j_1} \Xbar{X_k} X_{l_1} + \Xbar{X_i} X_{j_1} \Xbar{X_k} X_{l_1} + X_{i_1} \Xbar{X_j} \myspace \Xbar{X_k}  X_{l_1} \nonumber \\
&\qquad\qquad\qquad - X_{i_1} X_{j_1} X_{k_1}  \Xbar{X_l} + \Xbar{X_i} X_{j_1} X_{k_1}  \Xbar{X_l} + X_{i_1} \Xbar{X_j} X_{k_1}  \Xbar{X_l} \big] \nonumber \\
& \qquad - \E_{u_1} \left[ X_{i_1} X_{j_1} - \Xbar{X_i} X_{j_1} - X_{i_1} \Xbar{X_j} \right] 
\E_{u_1} \left[ X_{k_1} X_{l_1} - \Xbar{X_k} X_{l_1} - X_{k_1}  \Xbar{X_l} \right]
\biggr\} \nonumber \\ 
&= \dfrac{1}{4} \biggl\{ 
\Xbar{X_{i} X_{j} X_{k} X_{l}} - \Xbar{X_i} \myspace \Xbar{X_{j} X_{k} X_{l}} -  \Xbar{X_j} \myspace  \Xbar{X_{i}  X_{k} X_{l}} \nonumber \\
&\qquad - \Xbar{X_k} \myspace  \Xbar{X_{i} X_{j}  X_{l}} + \Xbar{X_i} \myspace  \Xbar{X_k} \myspace  \Xbar{ X_{j}  X_{l}} + \Xbar{X_j} \myspace  \Xbar{X_k} \myspace  \Xbar{X_{i}  X_{l}} \nonumber \\
&\qquad - \Xbar{X_{i} X_{j} X_{k}} \myspace \Xbar{X_l} + \Xbar{X_i} \myspace \Xbar{X_l} \myspace \Xbar{ X_{j} X_{k}}   + \Xbar{X_j} \myspace \Xbar{X_l} \myspace \Xbar{X_{i}  X_{k}}    \nonumber \\
&\qquad - \left( \Xbar{X_i X_j} - 2  \myspace \Xbar{X_i} \myspace \Xbar{X_j} \right) \left( \Xbar{X_k X_l} - 2  \myspace\Xbar{X_k} \myspace \Xbar{X_l} \right)
\biggr\} \nonumber
\end{align}

\subsubsection*{Case 2: $i=j$; $j \neq k,l$; $k = l$}

The kernels are 

\noindent\begin{minipage}{.5\linewidth}
\begin{align*}
&h(u_1,u_2) = \dfrac{1}{2} \left( X_{i_1} - X_{i_2} \right)^2;  \\
&\E_{u_2}[h(u_1,u_2)] = \dfrac{1}{2} \left( X_{i_1} - \Xbar{X_i} \right)^2;
\end{align*}
\end{minipage}\hfill
\begin{minipage}{.5\linewidth}
\begin{align*}
&g(v_1,v_2) = \dfrac{1}{2} \left( X_{k_1} - X_{k_2} \right)^2 \\
&\E_{u_2}[g(v_1,v_2)] = \dfrac{1}{2} \left( X_{k_1} - \Xbar{X_k} \right)^2
\end{align*}
\end{minipage}

Then, we have 
\begin{align}
\zeta_1 &= \Cov \left[ 
\dfrac{1}{2} \left( X_{i_1} - \Xbar{X_i} \right)^2;
\dfrac{1}{2} \left( X_{k_1} - \Xbar{X_k} \right)^2
\right] \\
&= \dfrac{1}{4} \biggl\{ \Cov \left[ 
X_{i_1}^2 - 2 X_{i_1} \Xbar{X_i} ;
X_{k_1}^2 - 2 X_{k_1} \Xbar{X_k}
\right] \biggr\} \nonumber \\
&= \dfrac{1}{4} \biggl\{ 
\E_{u_1} \left[ X_{i_1}^2 X_{k_1}^2 - 2 X_{i_1} \Xbar{X_i} X_{k_1}^2 - 2 X_{i_1}^2 X_{k_1} \Xbar{X_k} + 4 X_{i_1} \Xbar{X_i} X_{k_1} \Xbar{X_k} \right] \nonumber \\
&\qquad - \E_{u_1} \left[ X_{i_1}^2 - 2 X_{i_1} \Xbar{X_i}\right] \E_{u_1} \left[ X_{k_1}^2 - 2 X_{k_1} \Xbar{X_k} \right]
\biggr\} \nonumber \\
&= \dfrac{1}{4} \biggl\{ 
\Xbar{ X_{i}^2 X_{k}^2} - 2 \myspace \Xbar{X_i} \myspace \Xbar{X_{i} X_{k}^2}  - 2 \myspace \Xbar{X_{i}^2 X_{k_1}}  \myspace \Xbar{X_k} + 4 \Xbar{X_{i} X_{k}} \myspace \Xbar{X_i}  \myspace \Xbar{X_k} \nonumber \\
&\qquad -  \left( \Xbar{X_{i}^2} - 2 \myspace \Xbar{X_i}^2 \right) \left( \Xbar{X_{k}^2} - 2 \myspace \Xbar{X_k}^2 \right) 
\biggr\} \nonumber
\end{align}

\subsubsection*{Case 3: $i=j$; $j \neq k,l$; $k \neq l$}

The kernels are 

\noindent\begin{minipage}{.5\linewidth}
\begin{align*}
&h(u_1,u_2) = \dfrac{1}{2} \left( X_{i_1} - X_{i_2} \right)^2 ;  \\
&\E_{u_2}[h(u_1,u_2)] = \dfrac{1}{2} \left( X_{i_1} - c \right)^2 ;
\end{align*}
\end{minipage}\hfill
\begin{minipage}{.5\linewidth}
\begin{align*}
&g(v_1,v_2) = \dfrac{1}{2} \left( X_{k_1} - X_{k_2} \right)  \left( X_{l_1} - X_{l_2} \right)  \\
&\E_{u_2}[g(v_1,v_2)] = \dfrac{1}{2} \left( X_{k_1} - \Xbar{X_k} \right)  \left( X_{l_1} - \Xbar{X_l} \right)
\end{align*}
\end{minipage}

Then, we have 
\begin{align}
\zeta_1 &= \Cov\left[ 
\dfrac{1}{2} \left( X_{i_1} - \Xbar{X_i} \right)^2 ; 
 \dfrac{1}{2} \left( X_{k_1} - \Xbar{X_k} \right)  \left( X_{l_1} - \Xbar{X_l} \right)
\right] \\
&= \dfrac{1}{4} \biggl\{ \Cov \left[
X_{i_1}^2 - 2 X_{i_1} \Xbar{X_i} ;
X_{k_1} X_{l_1} - \Xbar{X_k} X_{l_1} - X_{k_1} \Xbar{X_l}
\right] \biggr\} \nonumber \\
&= \dfrac{1}{4} \biggl\{ 
\E_{u_1} \big[  X_{i_1}^2 X_{k_1} X_{l_1} - 2 X_{i_1} \Xbar{X_i} X_{k_1} X_{l_1} - X_{i_1}^2 \Xbar{X_k} X_{l_1} \nonumber \\
&\qquad\qquad + 2 X_{i_1} \Xbar{X_i} \myspace \Xbar{X_k} X_{l_1} - X_{i_1}^2 X_{k_1} \Xbar{X_l} + 2 X_{i_1} \Xbar{X_i} X_{k_1} \Xbar{X_l} \big] \nonumber \\
&\qquad - \E_{u_1} \left[ X_{i_1}^2 - 2 X_{i_1} \Xbar{X_i}  \right] 
\E_{u_1} \left[ X_{k_1} X_{l_1} - \Xbar{X_k} X_{l_1} - X_{k_1} \Xbar{X_l} \right] 
\biggr\} \nonumber \\
&= \dfrac{1}{4} \biggl\{ 
\Xbar{ X_{i}^2 X_{k} X_{l} } - 2 \myspace \Xbar{X_{i} X_{k} X_{l} } \myspace \Xbar{X_i} - \Xbar{X_{i}^2 X_{l}} \myspace \Xbar{X_k}  \nonumber \\
&\qquad\qquad + 2 \myspace \Xbar{X_{i} X_{l}} \myspace \Xbar{X_i} \myspace \Xbar{X_k}  - \Xbar{X_{i}^2 X_{k_1}} \myspace \Xbar{X_l} + 2 \myspace \Xbar{X_{i} X_{k}} \myspace \Xbar{X_i} \myspace \Xbar{X_l} \nonumber \\
&\qquad - \left( \Xbar{X_{i}^2} - 2 \myspace \Xbar{X_i}^2 \right) \left( \Xbar{X_{k} X_{l} } - 2 \myspace \Xbar{X_k} \myspace \Xbar{X_l}\right)
\biggr\} \nonumber
\end{align}

\subsubsection*{Case 4: $i=k$; $j \neq i,k,l$; $k \neq l$}

The kernels are 

\noindent\begin{minipage}{.5\linewidth}
\begin{align*}
&h(u_1,u_2) = \dfrac{1}{2} \left( X_{i_1} - X_{i_2} \right) \left( X_{j_1} - X_{j_2} \right)  ;  \\
&\E_{u_2}[h(u_1,u_2)] = \dfrac{1}{2} \left( X_{i_1} - \Xbar{X_i} \right) \left( X_{j_1} - \Xbar{X_j}\right) ;
\end{align*}
\end{minipage}\hfill
\begin{minipage}{.5\linewidth}
\begin{align*}
&g(v_1,v_2) = \dfrac{1}{2} \left( X_{i_1} - X_{i_2} \right)  \left( X_{l_1} - X_{l_2} \right) \\
&\E_{u_2}[g(v_1,v_2)] = \dfrac{1}{2} \left( X_{i_1} - \Xbar{X_i} \right)  \left( X_{l_1} - \Xbar{X_l} \right)
\end{align*}
\end{minipage}

Then, we have 
\begin{align}
\zeta_1 &= \Cov \left[ 
\dfrac{1}{2} \left( X_{i_1} - \Xbar{X_i} \right) \left( X_{j_1} - \Xbar{X_j}\right)  ;
\dfrac{1}{2} \left( X_{i_1} - \Xbar{X_i} \right)  \left( X_{l_1} - \Xbar{X_l} \right)
\right] \\
&= \dfrac{1}{4} \biggl\{ \Cov \left[ 
X_{i_1} X_{j_1} - \Xbar{X_i} X_{j_1} - X_{i_1} \Xbar{X_j}  ;
X_{i_1} X_{l_1} - \Xbar{X_i} X_{l_1} - X_{i_1} \Xbar{X_l} 
\right] \biggr\} \nonumber \\
&= \dfrac{1}{4} \biggl\{ \E_{u_1} \big[ 
X_{i_1}^2 X_{j_1} X_{l_1} - \Xbar{X_i} X_{j_1} X_{i_1} X_{l_1} - X_{i_1}^2 \Xbar{X_j} X_{l_1} \nonumber \\ 
&\qquad\qquad - X_{i_1} X_{j_1} \Xbar{X_i} X_{l_1} + \Xbar{X_i}^2 \myspace X_{j_1} X_{l_1} + X_{i_1} \Xbar{X_j} \myspace \Xbar{X_i} X_{l_1} \nonumber \\
&\qquad\qquad - X_{i_1}^2 X_{j_1} \Xbar{X_l}  + \Xbar{X_i} X_{j_1} X_{i_1} \Xbar{X_l}  + X_{i_1}^2 \Xbar{X_j}  \Xbar{X_l} 
\big] \nonumber \\
&\qquad -  \E_{u_1} \left[ X_{i_1} X_{j_1} - \Xbar{X_i} X_{j_1} - X_{i_1} \Xbar{X_j} \right]  
\E_{u_1} \left[ X_{i_1} X_{l_1} - \Xbar{X_i} X_{l_1} - X_{i_1} \Xbar{X_l} \right] 
\biggr\} \nonumber \\
&= \dfrac{1}{4} \biggl\{ 
\Xbar{X_{i_1}^2 X_{j_1} X_{l_1}} - \Xbar{X_i} \myspace \Xbar{X_{j_1} X_{i_1} X_{l_1}} - \Xbar{X_{i_1}^2 X_{l_1} } \myspace \Xbar{X_j} \nonumber \\ 
&\qquad\qquad - \Xbar{X_{i_1} X_{j_1} X_{l_1}} \myspace \Xbar{X_i}  + \Xbar{X_i}^2 \myspace \Xbar{X_{j_1} X_{l_1}}  + \Xbar{X_{i_1} X_{l_1}} \myspace \Xbar{X_j} \myspace \Xbar{X_i}  \nonumber \\
&\qquad\qquad - \Xbar{X_{i_1}^2 X_{j_1} } \myspace \Xbar{X_l}  + \Xbar{X_i} \myspace \Xbar{X_{j_1} X_{i_1}} \myspace \Xbar{X_l}  + \Xbar{X_{i_1}^2} \myspace \Xbar{X_j} \myspace  \Xbar{X_l} 
\big] \nonumber \\
&\qquad -  \left( \Xbar{X_{i} X_{j}} - 2 \myspace \Xbar{X_i} \myspace \Xbar{X_j} \right)
\left(  \Xbar{X_{i} X_{l}} - 2 \myspace \Xbar{X_i} \myspace \Xbar{X_l} \right) 
\biggr\} \nonumber
\end{align}

\subsubsection*{Case 5: $i=k$; $i \neq j$; $j=l$; }

\noindent\begin{minipage}{.5\linewidth}
\begin{align*}
&h(u_1,u_2) = \dfrac{1}{2} \left( X_{i_1} - X_{i_2} \right) \left( X_{j_1} - X_{j_2} \right);  \\
&\E_{u_2}[h(u_1,u_2)] = \dfrac{1}{2} \left( X_{i_1} - \Xbar{X_i} \right) \left( X_{j_1} - \Xbar{X_j} \right);
\end{align*}
\end{minipage}\hfill
\begin{minipage}{.5\linewidth}
\begin{align*}
&g(v_1,v_2) = h(u_1,u_2) \\
&\E_{u_2}[g(v_1,v_2)] = \E_{u_2}[h(u_1,u_2)]
\end{align*}
\end{minipage}

Then, we have 
\begin{align}
\zeta_1 &= \Var \left[ \dfrac{1}{2} \left( X_{i_1} - \Xbar{X_i} \right) \left( X_{j_1} - \Xbar{X_j} \right) \right]  \\
&= \dfrac{1}{4} \biggl\{ \Var \left[ X_{i_1} X_{j_1} - \Xbar{X_i} X_{j_1} - X_{i_1} \Xbar{X_j}  \right]  \biggr\} \nonumber \\
&= \dfrac{1}{4} \biggl\{ \E_{u_1} \left[ (X_{i_1} X_{j_1} - \Xbar{X_i} X_{j_1} - X_{i_1}  \Xbar{X_j} )^2 \right]- \E_{u_1} \left[ X_{i_1} X_{j_1} - \Xbar{X_i} X_{j_1} - X_{i_1} \Xbar{X_j}  \right]^2   \biggr\}  \nonumber \\
&= \dfrac{1}{4} \biggl\{ \E_{u_1} \big[ X_{i_1}^2 X_{j_1}^2 - 2 X_{i_1} X_{j_1}^2 \Xbar{X_i} + \Xbar{X_i}^2 X_{j_1}^2 
 - 2 X_{i_1}^2 X_{j_1}  \Xbar{X_j} + 2 \Xbar{X_i} X_{j_1} X_{i_1}  \Xbar{X_j} + X_{i_1}^2  \Xbar{X_j}^2 \big]  \nonumber \\
&\qquad \qquad  - \left( \Xbar{X_i X_j} - 2 (\Xbar{X_i} \myspace \Xbar{X_j}) \right)^2   \biggr\}  \nonumber \\
&= \dfrac{1}{4} \biggl\{  \Xbar{X_{i}^2 X_{j}^2} - 2 \Xbar{X_{i} X_{j}^2} \myspace \Xbar{X_i} + \Xbar{X_i}^2 \myspace\Xbar{X_{j}^2} 
 - 2 \Xbar{X_{i}^2 X_{j}} \myspace  \Xbar{X_j} + 2 \Xbar{X_i} \myspace \Xbar{X_j}  \myspace\Xbar{X_{j} X_{i}} + \Xbar{X_{i}^2} \myspace  \Xbar{X_j}^2 \nonumber \\
&\qquad \qquad - \left( \Xbar{X_i X_j} - 2 (\Xbar{X_i} \myspace \Xbar{X_j}) \right)^2   \biggr\}  \nonumber
\end{align}

\subsubsection*{Case 6: $i=j=k$; $i \neq l$}

The kernels are 

\noindent\begin{minipage}{.5\linewidth}
\begin{align*}
&h(u_1,u_2) = \dfrac{1}{2} \left( X_{i_1} - X_{i_2} \right)^2 ;  \\
&\E_{u_2}[h(u_1,u_2)] = \dfrac{1}{2} \left( X_{i_1} - \Xbar{X_i} \right)^2 ;
\end{align*}
\end{minipage}\hfill
\begin{minipage}{.5\linewidth}
\begin{align*}
&g(v_1,v_2) = \dfrac{1}{2} \left( X_{i_1} - X_{i_2} \right)  \left( X_{l_1} - X_{l_2} \right) \\
&\E_{u_2}[g(v_1,v_2)] = \dfrac{1}{2} \left( X_{i_1} -\Xbar{X_i} \right)  \left( X_{l_1} - \Xbar{X_l} \right)
\end{align*}
\end{minipage}

Then, we have 
\begin{align}
\zeta_1 &= \Cov \left[
\dfrac{1}{2} \left( X_{i_1} - \Xbar{X_i} \right)^2 ;
\dfrac{1}{2} \left( X_{i_1} -\Xbar{X_i} \right)  \left( X_{l_1} - \Xbar{X_l} \right)
\right]  \\
&= \dfrac{1}{4} \biggl\{  \Cov \left[
X_{i_1}^2 - 2 X_{i_1} \Xbar{X_i};
X_{i_1} X_{l_1} - \Xbar{X_i} X_{l_1} - X_{i_1} \Xbar{X_l}
\right] \biggr\}  \nonumber \\
&= \dfrac{1}{4} \biggl\{  \E_{u_1} \big[
X_{i_1}^2 X_{i_1} X_{l_1} - 2 X_{i_1} \Xbar{X_i} X_{i_1} X_{l_1} - X_{i_1}^2 \Xbar{X_i} X_{l_1} \nonumber \\
&\qquad \qquad + 2 X_{i_1} \Xbar{X_i} \myspace \Xbar{X_i} X_{l_1} - X_{i_1}^2 X_{i_1} \Xbar{X_l} + 2 X_{i_1} \Xbar{X_i} X_{i_1} \Xbar{X_l} \big] \nonumber \\
&\qquad - \E_{u_1} \left[ X_{i_1}^2 - 2X_{i_1} \Xbar{X_i} \right] 
\E_{u_1} \left[ X_{i_1} X_{l_1} - \Xbar{X_i} X_{l_1} - X_{i_1} \Xbar{X_l} \right]  
\biggr\}  \nonumber \\
&= \dfrac{1}{4} \biggl\{  
\Xbar{X_{i}^3 X_{l}} - 3 \myspace \Xbar{X_{i}^2 X_{l}} \myspace \Xbar{X_i}  + 2 \myspace \Xbar{ X_{i} X_{l}} \myspace \Xbar{X_i}^2 - \Xbar{X_{i}^3} \myspace \Xbar{X_l} + 2 \myspace \Xbar{X_{i}^2} \myspace \Xbar{X_i} \myspace \Xbar{X_l}  \nonumber \\
&\qquad - \left( \Xbar{X_i^2} - 2 \myspace  \Xbar{X_i}^2 \right)
\left( \Xbar{X_i X_l} - 2 \myspace  \Xbar{X_i} \myspace \Xbar{X_l} \right)
\biggr\}  \nonumber
\end{align}

\subsubsection*{Case 7: $i=j,k,l$}

The kernels are 

\noindent\begin{minipage}{.5\linewidth}
\begin{align*}
&h(u_1,u_2) = \dfrac{1}{2} \left( X_{i_1} - X_{i_2} \right)^2 ;  \\
&\E_{u_2}[h(u_1,u_2)] = \dfrac{1}{2} \left( X_{i_1} - \Xbar{X_i} \right)^2 ;
\end{align*}
\end{minipage}\hfill
\begin{minipage}{.5\linewidth}
\begin{align*}
&g(v_1,v_2) = h(u_1,u_2) \\
&\E_{u_2}[g(v_1,v_2)] =\E_{u_2}[h(u_1,u_2)]
\end{align*}
\end{minipage}

Then, we have 
\begin{align}
\zeta_1 &= \Var \left[ \dfrac{1}{2} \left( X_{i_1} - \Xbar{X_i} \right)^2 \right]  \\
&= \dfrac{1}{4} \Var \left[  X_{i_1}^2 - 2 X_{i_1} \Xbar{X_i} \right] \nonumber \\
&= \dfrac{1}{4} \biggl\{
\E_{u_1} \left[ \left( X_{i_1}^2 - 2 X_{i_1} \Xbar{X_i} \right)^2\right]
- \E_{u_1} \left[ X_{i_1}^2 - 2 X_{i_1} \Xbar{X_i} \right]^2
\biggr\}\nonumber \\
&= \dfrac{1}{4} \biggl\{
\Xbar{X_{i}^4} - 4 \Xbar{X_{i}^3} \myspace \Xbar{X_i} + 4 \Xbar{X_{i}^2} \myspace \Xbar{X_i}^2 
- \left( \Xbar{X_i^2} - 2 \Xbar{X_i}^2 \right)^2
\biggr\}\nonumber
\end{align}


\subsection{Derivation in $\mathcal{O}(n)$ time for all terms} \label{aistats2021:appendix:subsec:proof_linear_computation_time}

In section~\ref{aistats2021:appendix:subsec:derivation_sevencases}, all terms are in the form of $\E[X]$,$\E[XY]$,$\E[XYZ]$ and $\E[XYUV]$ and can be computed in $\mathcal{O}(n)$ as following

\begin{align}
\E[X] &\approx \frac{1}{m} \sum_{q=1}^n X_q \\
\E[XY] &\approx \frac{1}{m} \sum_{q=1}^n X_q \odot Y_q \\
\E[XYZ] &\approx \frac{1}{m} \sum_{q=1}^n X_q \odot Y_q \odot Z_q \\
\E[XYUV] &\approx \frac{1}{m} \sum_{q=1}^n X_q \odot Y_q \odot U_q \odot V_q
\end{align}

\section{Orthonormality constraints}
\label{section:orthonormality}

The steps for implementing the orthonormality constraints are summarized in Algorithm~\ref{algo:orthonormality}.

\begin{algorithm}[ht!]
    \footnotesize
    \begin{algorithmic}[1]
        \Require{Previously obtained lower and upper bound on $V \odot V$, $\hat{\beta}$ and $\hat{\alpha}$ respectively; $N$ - number of eigenvectors} 
        
      \State Update $\hat{\beta}$ and $\hat{\alpha}$ using Equation~\eqref{eq:TightenedBoundsFromNormConstraints}
      \State Obtain $\beta$ and $\alpha$, the signed lower and upper bound respectively, using Corollary~\ref{cor:signedBounds}
       
        \State $\mu, \nu \leftarrow \textsc{GetBoundsOnSum}(\alpha, \beta) $
        \For {$l=1,2,\ldots, N$} 
            \For {$i=1,2,\ldots,N$}
                \For {$j=1,2, \ldots, N, j \neq i $}
                \If {$\alpha_{lj} < 0$}
                    \State $\beta_{li} = \max \left (\beta_{li},  
                    \min(-\frac{\mu_{ijl}}{\alpha_{lj}}, -\frac{\mu_{ijl}}{\beta_{lj}} ) \right ) $ 
                    
                     \State $\alpha_{li} = \min \left (\alpha_{li},  \min(-\frac{\nu_{ijl}}{\alpha_{lj} }, -\frac{\nu_{ijl}}{\beta_{lj}} ) \right ) $ 
                \EndIf
                
                \If {$\beta_{lj} > 0 $}
                  \State $\beta_{li} = \max \left (\beta_{li},  
                    \min(-\frac{\nu_{ijl}}{\alpha_{lj}}, -\frac{\nu_{ijl}}{\beta_{lj}} ) \right ) $ 
                    
                    \State $\alpha_{li} = \min \left (\alpha_{li},  \min(-\frac{\mu_{ijl}}{\alpha_{lj} }, -\frac{\mu_{ijl}}{\beta_{lj}} ) \right ) $ 
                \EndIf
                
                \If {$\alpha_{lj} > 0 \wedge \beta_{lj} < 0$}
                    \If {$\beta_{li} \geq 0$}
                        $\beta_{li} = \max \left (\beta_{li},  
                        -\frac{\mu_{ijl}}{\beta_{lj}}, -\frac{\nu_{ijl}}{\alpha_{lj}} \right ) $ 
                    \EndIf
                    \If {$\alpha_{li} < 0$}
                        $\alpha_{li} = \min \left (\alpha_{li},  -\frac{\mu_{ijl}}{\alpha_{lj} }, -\frac{\nu_{ijl}}{\beta_{lj}} \right ) $ 
                    \EndIf               
                \EndIf
                
                \If {$\beta_{lj} = 0$}
                    \If {$\nu_{ijl} < 0$} $\beta_{li} = \max \left (\beta_{li}, -\frac{\nu_{ijl}}{\alpha_{lj} } \right ) $ \EndIf
                    \If {$\mu_{ijl} > 0$} $\alpha_{li} = \min \left (\alpha_{li}, 
                        -\frac{\mu_{ijl}}{\alpha_{lj} } \right ) $  \EndIf

                \EndIf
                
                \If {$\alpha_{lj} = 0$}
                    \If {$\mu_{ijl} > 0$} $\beta_{li} = \max \left (\beta_{li}, 
                        -\frac{\mu_{ijl}}{\beta_{lj} } \right ) $  \EndIf
                    \If {$\nu_{ijl} < 0$} $\alpha_{li} = \min \left (\alpha_{li}, 
                        -\frac{\nu_{ijl}}{\beta_{lj} } \right ) $ \EndIf
                \EndIf
            	
            	\EndFor
        	\EndFor
    	\EndFor
      \\\hrulefill
      \Function {GetBoundsOnSum}{$\alpha$, $\beta$}
        	\For {$l=1,2,\ldots, N$} 
            	\For {$i=1,2,\ldots,N-1$}
                	\For {$j=i+1 \ldots, N$}
                	    \State $\mu_{ijl} = \sum_{k\neq l} \min(\beta_{ki} \beta_{kj}, \beta_{ki} \alpha_{kj}, \alpha_{ki} \beta_{kj}, \alpha_{ki} \alpha_{kj})$ 
                	    \State $\nu_{ijl} = \sum_{k\neq l} \max(\beta_{ki} \beta_{kj}, \beta_{ki} \alpha_{kj}, \alpha_{ki} \beta_{kj}, \alpha_{ki} \alpha_{kj})$ 
                	\EndFor
            	\EndFor
        	\EndFor
        	\State \Return $\mu$, $\nu$
    	\EndFunction

    \end{algorithmic}
    
    \caption{Orthonormality constraints.}
    \label{algo:orthonormality}
\end{algorithm}

\section{Synthetic Data Simulation Details}
\begin{enumerate}
    \item Generate a random adjacency matrix $A$ with arbitrary sparsity, such that $A_{ij}\sim \textrm{Bernoulli}(t), t\sim \mathcal{U}(0, 1)$.
    \item Generate a random affinity matrix $\hat A = A * c * (\mathbf{1} + R)$, where $c$ is some positive constant, $\mathbf{1}$ -- matrix of all ones, $R_{ij}\sim\mathcal{U}(0, 1)$. 
    \item Do the eigendecomposition, such that $\hat A=VSV^\intercal$, set $\hat S_{ii}=\max(S_{ii}, 1 + |\nu|), \nu \sim \mathcal{U}(0, 1)$.
    \item $\Theta=V\hat SV^\intercal$. If $\Theta$ does not have at least 1 zero, the sample is rejected.
\end{enumerate}

We generate the data $Y$ from the Laplace distribution as follows~\citep{kotz2012laplace}:
\begin{enumerate}
    \item Generate $p$-dimensional standard exponential variate $W\sim -\log \left[\mathcal{U}(0, 1)\right]$.
    \item Generate variable $Z\sim\mathcal{N}(0, \Theta^{-1})$.
    \item $Y = m \cdot W + \sqrt{W}\cdot Z$, where $m$ defines the location of the distribution.
\end{enumerate}

\bibliography{refs}


\end{document}